\documentclass[a4paper]{article}

\usepackage[T1]{fontenc}
\usepackage{lmodern}
\usepackage[a4paper,margin=30mm]{geometry}
\usepackage{amsmath,amssymb,mathtools,amsthm}
\usepackage{microtype}
\usepackage{array}
\usepackage{xcolor}
\usepackage{tikz}
\usepackage[hidelinks]{hyperref}

\hypersetup{
  pdftitle={Optimal Actuator Design across Ranks and Control Horizons},
  pdfauthor={Emmanuel Trelat and Enrique Zuazua},
  pdfsubject={Rank-constrained actuator design and controllability Gramians},
  pdfkeywords={actuator design, controllability Gramian, rank constraint, phase alignment}}

\numberwithin{equation}{section}
\numberwithin{figure}{section}
\numberwithin{table}{section}

\theoremstyle{plain}
\newtheorem{theorem}{Theorem}[section]
\newtheorem{lemma}[theorem]{Lemma}
\newtheorem{proposition}[theorem]{Proposition}
\newtheorem{corollary}[theorem]{Corollary}

\theoremstyle{definition}
\newtheorem{example}[theorem]{Example}
\theoremstyle{remark}
\newtheorem{remark}[theorem]{Remark}

\def\R{\mathbb{R}}
\def\N{\mathbb{N}}
\newcommand{\C}{\mathbb{C}}

\renewcommand{\geq}{\geqslant}
\renewcommand{\leq}{\leqslant}

\newcommand{\Id}{\mathrm{Id}}
\newcommand{\tr}{\operatorname{tr}}
\newcommand{\rank}{\operatorname{rank}}
\newcommand{\Span}{\operatorname{span}}
\newcommand{\diag}{\operatorname{diag}}
\newcommand{\dist}{\operatorname{dist}}
\newcommand{\calG}{\mathcal{G}}

\title{Optimal Actuator Design across Ranks and Control Horizons}

\author{
Emmanuel Tr\'elat\thanks{Sorbonne Universit\'e, Universit\'e Paris Cit\'e, CNRS, Inria, Laboratoire Jacques-Louis Lions (LJLL), F-75005 Paris, France (\texttt{emmanuel.trelat@sorbonne-universite.fr}).}
\and
Enrique Zuazua\thanks{Chair for Dynamics, Control, Machine Learning and Numerics (AvH Professorship), Department of Mathematics, FAU Erlangen-N\"urnberg, 91058 Erlangen, Germany; Chair of Computational Mathematics, Fundaci\'on Deusto, Bilbao, Spain; Departamento de Matem\'aticas, Universidad Aut\'onoma de Madrid, Spain (\texttt{enrique.zuazua@fau.de}).}
}

\date{}

\begin{document}

\maketitle

\begin{abstract}
We study how actuator rank and control horizon jointly determine worst-case control performance for finite-dimensional linear systems under a fixed total actuator-gain budget. The design variable is the input covariance $X=BB^\top$. Scalar actuators give rank-one matrices $X=bb^\top$; dropping the rank constraint yields their convex hull and a semidefinite benchmark. We identify regimes in which low-rank designs necessarily fall short and others in which they attain the benchmark. At small time, the relaxed maximizer is unique and full rank. If $A$ is cyclic, then, for every $k<n$, optimal rank-$k$ performance becomes negligible relative to the relaxed optimum as the horizon tends to zero, with a precise algebraic rate. At rank one, we compute the leading coefficient and next-order term; for real normal matrices with simple complex spectrum, we determine the leading spectral weights and their first horizon-dependent correction. For symmetric matrices, the full-rank obstruction persists at every positive time. For skew-symmetric matrices, by contrast, phase-aligned horizons can make the time-averaged covariance of a low-rank actuator exactly isotropic, closing the relaxation gap. We characterize scalar recovery, determine the minimum recovery rank at every horizon, and distinguish exact or near recovery from ordinary long-time averaging. Galerkin truncations of the heat, Schr\"odinger, and wave equations illustrate the contrast between concentrated small-time profiles and flat profiles at phase-aligned horizons. These PDE statements concern finite-dimensional Galerkin optimizers and their renormalized modal limits; they do not assert existence of an optimizer for the full PDE.
\end{abstract}

\medskip
\noindent\textbf{Keywords.}
Actuator design; controllability Gramian; worst-case control; rank constraints; semidefinite relaxation; phase alignment; resonant recovery; small-time asymptotics; Galerkin truncation.

\medskip
\noindent\textbf{2020 Mathematics Subject Classification.}
93B05, 93B40, 90C22, 15A18, 93C20.
\bigskip


\section{Introduction}

We study the maximization of the smallest eigenvalue of the finite-horizon controllability Gramian over input covariance matrices with unit trace and a prescribed rank bound. Dropping the rank constraint yields a semidefinite relaxation that provides a benchmark for the constrained problem. Our aim is to determine how optimal performance depends jointly on the actuator rank and the control horizon, and when designs of a prescribed rank can attain this benchmark.

Let $A\in\R^{n\times n}$ and $T>0$. A unit vector $b\in\R^n$ defines the scalar-input system
$$
\dot x(t)=Ax(t)+bu(t), \qquad u(t)\in\R,
$$
and its controllability Gramian
$$
G_T(A,b)=\int_0^T e^{tA}bb^\top e^{tA^\top}\,dt.
$$
When $(A,b)$ is controllable, the largest $L^2$ norm of the minimum-energy control needed to steer the system from the origin to a unit target at time $T$ is
$$
C_T(A,b)=\frac{1}{\sqrt{\lambda_{\min}(G_T(A,b))}}.
$$
We set $C_T(A,b)=+\infty$ otherwise. Minimizing this worst-case control cost is therefore equivalent to solving
\begin{equation}\label{gamma1_intro}
\gamma_1(A,T)
=
\max_{\Vert b\Vert=1}
\lambda_{\min}(G_T(A,b)).
\end{equation}

To compare scalar and multi-input designs under the same total actuator-gain budget, consider
$$
\dot x(t)=Ax(t)+Bu(t),
\qquad
B\in\R^{n\times m},
\qquad
u(t)\in\R^m,
$$
with $\tr(B^\top B)=1$. The Gramian
$$
G_T(A,B)=\int_0^T e^{tA}BB^\top e^{tA^\top}\,dt
$$
depends on $B$ only through the covariance $X=BB^\top$. This matrix is positive semidefinite, has trace one, and its rank is the least number of independent input directions needed to realize it. Scalar actuators correspond to rank-one covariances $X=bb^\top$.

Every positive semidefinite matrix of trace one is a convex combination of rank-one covariances of trace one. Dropping the rank constraint thus gives the convex hull of the scalar admissible set and the relaxed problem
\begin{equation}\label{gamma_rel_intro}
\gamma_{\mathrm{rel}}(A,T)
=
\max_{X\succeq0,\ \tr X=1}
\lambda_{\min}\bigg(
\int_0^T e^{tA}Xe^{tA^\top}\,dt
\bigg).
\end{equation}
This semidefinite relaxation provides an upper benchmark for designs with a limited number of input directions. More generally, we denote by $\gamma_k(A,T)$ the optimal value among covariances of rank at most $k$; see \eqref{gammam}. Thus $\gamma_1(A,T)\leq\gamma_2(A,T)\leq\cdots\leq\gamma_n(A,T)=\gamma_{\mathrm{rel}}(A,T)$.

Our central questions concern attainment of this benchmark and the performance loss when it cannot be attained. At rank $k$, exact recovery means
$\gamma_k(A,T)=\gamma_{\mathrm{rel}}(A,T)$.
Since the admissible covariance sets are compact, this equality is equivalent to the existence of a relaxed maximizer of rank at most $k$. The least rank among all relaxed maximizers therefore gives the minimum number of input directions required for exact recovery. In the scalar case, the ratio $\rho_1(A,T)=\gamma_1(A,T)/\gamma_{\mathrm{rel}}(A,T)$ measures the fraction of the relaxed benchmark retained by the optimal single input direction, while $1-\rho_1(A,T)$ is the relative performance gap.

An explicit illustration is provided by the planar rotation
$$
A=\omega J,
\qquad
J=\begin{pmatrix}0&-1\\1&0\end{pmatrix},
\qquad
\omega\neq0,
$$
analyzed in Example~\ref{ex_planar_rotation} and plotted in Figure~\ref{fig_planar_ratio}. This particular system exhibits small-time loss, exact recovery at phase-aligned horizons, and asymptotic recovery through long-time averaging.

\begin{figure}[ht]
\centering
\begin{tikzpicture}[
  x=0.95cm,
  y=3.6cm,
  line width=1pt,
  every node/.style={inner sep=2pt}
]
\draw[->] (0,0) edge (10.9,0);
\node[right] at (10.9,0) {$\vert\omega\vert T$};
\draw[->] (0,0) edge (0,1.30);
\node[above] at (0,1.30) {$\rho_1$};
\node[below left] at (0,0) {$0$};

\draw (-0.1,1) edge (0.1,1);
\node[left] at (-0.1,1) {$1$};
\draw[densely dashed,gray] (0,1) edge (10.5,1);

\foreach \x/\lab in {3.1416/\pi,6.2832/2\pi,9.4248/3\pi}{
  \draw[gray!45,densely dotted] (\x,0) edge (\x,1);
  \node[below] at (\x,0) {$\lab$};
}

\draw[
  line width=1pt,
  domain=0.05:10.5,
  samples=260,
  smooth
]
plot (\x,{1-abs(sin(\x r))/\x});

\foreach \x in {3.1416,6.2832,9.4248}{
  \fill (\x,1) circle (1.3pt);
}

\node[fill=white] at (5,1.17) {phase-aligned returns};
\node[align=left,anchor=west,fill=white] at (1.6,0.17)
  {quadratic decay\\of the recovery ratio};
\draw[->,gray] (1.52,0.17) edge (1,0.152);
\node[align=right,anchor=east,fill=white] at (10.4,0.45)
  {long-time\\averaging};
\end{tikzpicture}
\caption{Scalar recovery ratio for the planar rotation
$A=\omega J\in\R^{2\times2}$, with $\omega\neq0$:
$\rho_1(A,T)=1-\vert\sin(\omega T)\vert/(\vert\omega\vert T)$.
As $T\to0^+$,
$\rho_1(A,T)=(\omega T)^2/6+\mathrm{O}(T^4)$,
in agreement with \eqref{intro_short_ratio} for $n=2$.
Exact recovery occurs at $\vert\omega\vert T\in\pi\N^*$,
and $\rho_1(A,T)\to1$ as $T\to+\infty$.
This figure concerns the planar example; the finite-time returns
and the long-time limit shown here are not properties of general
matrices $A$.}
\label{fig_planar_ratio}
\end{figure}

\medskip
{\bf Contributions of this paper.}
For every matrix $A$, the relaxed maximizer is unique and positive definite for all sufficiently small $T>0$. Consequently,
$\gamma_k(A,T)<\gamma_{\mathrm{rel}}(A,T)$
for every $k<n$ at such horizons. This obstruction is independent of any cyclicity assumption on $A$.

If $A$ is cyclic, meaning that
$\Span(b,Ab,\ldots,A^{n-1}b)=\R^n$
for some vector $b$, we obtain the sharper comparison
$$
\frac{\gamma_k(A,T)}{\gamma_{\mathrm{rel}}(A,T)}
\sim
n c_k(A)T^{2\lceil n/k\rceil-2},
\qquad T\to0^+,
\qquad 1\leq k<n,
$$
with $c_k(A)>0$. Thus even the optimal design with at most $k<n$ input directions retains a vanishing fraction of the relaxed benchmark. The exponent is twice the least achievable maximal Kalman depth with at most $k$ input directions. At intermediate ranks, the leading constant exists and admits a lower bound in terms of fixed-covariance coefficients; whether this bound is exact remains open.

At rank one, we identify the leading coefficient explicitly:
\begin{equation}\label{intro_short_ratio}
\frac{\gamma_1(A,T)}{\gamma_{\mathrm{rel}}(A,T)}
=
n\,\kappa_n\,\delta_n(A)^2
T^{2n-2}(1+\mathrm{o}(1)),
\qquad
\kappa_n
=
\frac{((n-1)!)^2}{(2n-1)!(2n-2)!},
\end{equation}
where $\delta_n(A)$ is the largest possible last transverse height of a Krylov chain generated by a unit vector. We also compute the first relative correction to \eqref{intro_short_ratio}. For real normal matrices with simple complex spectrum, we determine the unique leading spectral weights and the first variation of the exact maximizing weights with $T$. These exact weights are generally horizon dependent. For such normal matrices and all sufficiently small positive horizons, the maximizing actuator is unique modulo the orthogonal symmetries commuting with $A$, although it is never literally unique. No uniqueness modulo these symmetries is asserted for a general cyclic matrix.

\smallskip
For symmetric matrices, the small-time obstruction persists at every positive horizon: the relaxed maximizer is unique and positive definite for every $T>0$. Hence
$\gamma_k(A,T)<\gamma_{\mathrm{rel}}(A,T)$
for every $k<n$ and every $T>0$.

\smallskip
For skew-symmetric matrices, exact low-rank recovery is possible at special horizons. Every normalized Gramian has trace $T$, and the covariance $X=\Id/n$ attains the resulting upper bound, so
$\gamma_{\mathrm{rel}}(A,T)=T/n$.
The scalar gap closes if and only if $A$ is cyclic and $e^{TA}=\sigma\,\Id$ for some $\sigma\in\{-1,1\}$.
If $\ker A\neq\{0\}$, necessarily $\sigma=1$.
At such a horizon, a suitably balanced scalar actuator has the isotropic Gramian $T\,\Id/n$.

More generally, we group the signed eigenfrequencies, including zero when present, according to their endpoint phases. We refer to coincidences of these phases as phase alignment, and use resonance for the corresponding horizons. The least rank attaining $T/n$ is obtained by taking the largest complex spectral multiplicity in each endpoint phase class and summing over the distinct classes. This yields full rank at generic horizons, intermediate ranks at partial resonances, and rank one at complete phase alignment when $A$ is cyclic.

\smallskip
Exact recovery, near recovery, and long-time averaging must be distinguished. For a fixed skew-symmetric matrix $A$, as the scalar ratio $\rho_1(A,T)$ approaches one over bounded horizons, the endpoint flow approaches a common sign on all nonzero frequency subspaces. The quantitative estimate depends on $\Vert A\Vert T$. In contrast, for every fixed cyclic skew-symmetric matrix,
$\rho_1(A,T)\to1$ as $T\to+\infty$,
without requiring the endpoint flow to approach a common return. Indeed, for a balanced actuator, the off-diagonal correlations in a complex eigenbasis remain bounded, while the diagonal entries grow linearly with $T$.

\smallskip
We finally apply these results to spectral truncations of distributed systems. For the first $N$ modes of the heat equation, the leading small-time weights concentrate at frequencies of order $\sqrt N$ as $N\to+\infty$ and reveal, after renormalization, a dipole-type spectral envelope. Complex Schr\"odinger truncations have the same leading small-time weights, although their finite-horizon corrections differ; at revival times, their maximizing weights are exactly flat. At common periods, the wave equation in energy coordinates also has flat maximizing modal weights, even when the force is constrained to act on the velocity component. These are finite-dimensional results followed by modal limits. No maximizer for the full partial differential equation is claimed without an admissible actuator class and the required uniform estimates; a small-time PDE limit would additionally require a controlled joint regime in the truncation dimension and the horizon.

\medskip
{\bf Comments on the existing literature.}
For a fixed controllable pair, the small-time Gramian scale and its relation to the Kalman filtration are classical. The spectral expansion used below is established in the companion paper~\cite{TrelatZuazua2026gramian}, and its leading fast-control scale is consistent with \cite{Seidman1988,SeidmanYong1997}. The present paper passes from fixed-pair spectral analysis to optimization over normalized covariances with a prescribed rank bound, and compares the resulting optimal values with the semidefinite benchmark.

Worst-case energy actuator design for symmetric matrices was considered in \cite{ChenBelabbas2017}, while the scalar finite-dimensional problem and a method based on the Brunovsk\'y coordinates of \cite{Brunovsky1970} were developed in \cite{GeshkovskiZuazua2022}. Optimization over an input covariance under a power constraint appears, for stable systems and the infinite-horizon Gramian, in \cite{BaggioZampieriScherer2019}. We therefore do not claim the covariance relaxation itself as new. Our contribution is its horizon-dependent comparison with rank-constrained designs: the sharp rank-dependent small-time equivalents, the two-term scalar recovery ratio, the displacement of normal maximizing weights, the exact recovery rank at every skew-symmetric horizon, the quantitative distinction between near phase alignment and long-time averaging, and the consequences for modal truncations. Appendix~\ref{sec_brunovsky} also identifies the error by which the separated Brunovsk\'y objective in \cite{GeshkovskiZuazua2022} ceases to represent the original worst-energy criterion, as recorded in the corrigendum~\cite{GeshkovskiZuazuaCorrigendum}.

Gramian metrics, actuator and sensor selection, and convex design have a broad literature; see \cite{JoshiBoyd2009,Morris2011,Olshevsky2014,PasqualettiZampieriBullo2014,SummersCortesiLygeros2016}. Related design problems for distributed systems appear in \cite{PrivatTrelatZuazua2015,PrivatTrelatZuazua2016,PrivatTrelatZuazua2017}.

For a controllable pair, the orbit $t\mapsto e^{tA}b$ defines a continuous frame with Gramian $G_T$ as its frame operator. Tightness means $G_T=c\,\Id$ for some $c>0$; in the skew-symmetric setting with $\Vert b\Vert=1$, this becomes $G_T=T\,\Id/n$. Relevant finite-dimensional and group-orbit viewpoints can be found in \cite{AceskaKim2017,AldroubiHuangPetrosyan2019,AssifSheriffChatterjee2021,DiazMartinMedriMolter2021,Iverson2018}; see also \cite{AguileraCabrelliNegreiraPaternostro2025,AldroubiCabrelliKrishtalMolter2026}. The complete-period multiplicity rule follows from compact-group orbit-frame theory. The sum over endpoint phase classes on an arbitrary finite window is the additional feature needed here.

\medskip
{\bf Structure of the paper.}
Section~\ref{sec_obstructions} formulates the relaxation and proves the small-time and symmetric obstructions to exact low-rank recovery, together with the sharp small-time comparisons. Section~\ref{sec_skew} characterizes exact recovery for skew-symmetric dynamics. Section~\ref{sec_near} separates quantitative phase alignment from long-time averaging. Section~\ref{sec_PDE} derives the modal consequences for the heat, Schr\"odinger, and wave equations. The appendix contains the real phase-block construction and the correction of the Brunovsk\'y reduction. Matrix norms are operator norms throughout, unless another norm is specified.


\section{Covariance relaxation and rank obstructions}\label{sec_obstructions}
The scalar actuator problem remains our primary design problem. The covariance relaxation is its convexification and provides an upper benchmark. We now quantify both the value gap created by the rank-one constraint and the least rank at which this gap disappears. The first obstruction below is universal but local in time; the second is specific to symmetric dynamics and persists at every positive horizon.
\subsection{Multi-input formulation and recovery rank}
For a symmetric matrix $X$, set
$$
\calG_T(X) = \int_0^T e^{tA}Xe^{tA^\top}dt.
$$
For $k\in\{1,\ldots,n\}$, define
\begin{equation}\label{gammam}
\gamma_k(A,T) = \max_{\substack{X\succeq0,\ \tr X = 1\\ \rank X \leq k}}\lambda_{\min}(\calG_T(X)).
\end{equation}
Every feasible covariance in \eqref{gammam} factors as $X = BB^\top$ with $B\in\R^{n\times m}$, where $m = \rank X \leq k$, and $\tr(B^\top B) = 1$. Conversely, every such input matrix defines an admissible covariance. Hence \eqref{gammam} is precisely the best normalized design with at most $k$ input directions, and
$$
\gamma_1(A,T) \leq \gamma_2(A,T) \leq \cdots \leq \gamma_n(A,T) = \gamma_{\mathrm{rel}}(A,T).
$$
All maxima are attained, since the admissible covariance sets are compact. At rank $k$, the relaxation has no gap precisely when $\gamma_k(A,T) = \gamma_{\mathrm{rel}}(A,T)$.

Equivalently, $\gamma_{\mathrm{rel}}(A,T)$ is the value of the semidefinite program $\max\{\gamma\mid \calG_T(X)\succeq\gamma\Id,\ X\succeq0,\ \tr X = 1\}$. Since every feasible $X$ is a mixture of rank-one covariances, this has exactly the form of an approximate $E$-optimal design problem for the family of Gramians $\{G_T(A,b)\mid\Vert b\Vert = 1\}$, in the sense of \cite{Pukelsheim2006}; see also \cite{JoshiBoyd2009,BaggioZampieriScherer2019}.

We shall use two complementary diagnostics. The scalar recovery ratio is
$$
\rho_1(A,T) = \frac{\gamma_1(A,T)}{\gamma_{\mathrm{rel}}(A,T)}\in[0,1],
$$
and the exact recovery rank is
\begin{equation}\label{recovery_rank}
r(A,T) = \min\{\rank X\ \mid\ X\succeq0,\ \tr X = 1,\ \lambda_{\min}(\calG_T(X)) = \gamma_{\mathrm{rel}}(A,T)\}.
\end{equation}
Compactness of the rank strata gives
\begin{equation}\label{rank_equivalence}
\gamma_k(A,T) = \gamma_{\mathrm{rel}}(A,T) \ \Leftrightarrow\ k \geq r(A,T).
\end{equation}
Thus $1 - \rho_1(A,T)$ measures the relative scalar value gap, while $r(A,T)$ counts the input directions required to close the gap exactly.

The following elementary certificate will be useful below. The adjoint of a linear map on symmetric matrices is taken for the trace inner product.

\begin{lemma}[inverse certificate]\label{lem_inverse_certificate}
Let $\mathcal L:\operatorname{Sym}_n\to\operatorname{Sym}_n$ be invertible. Assume that $H = \mathcal L^{-1}(\Id)\succ0$ and $K = (\mathcal L^*)^{-1}(\Id)\succ0$.
Then $\tr H = \tr K = \tau$, and
$$
\max_{X\succeq0,\ \tr X = 1}\lambda_{\min}(\mathcal L(X)) = \frac{1}{\tau}.
$$
The unique maximizer is $X_\star = H/\tau$.
\end{lemma}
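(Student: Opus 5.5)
The proof is a direct duality argument based only on the trace inner product $\langle Y,Z\rangle=\tr(YZ)$ on $\operatorname{Sym}_n$. We start with the trace identity. Since $\mathcal L(H)=\Id$ and $\mathcal L^*(K)=\Id$,
$$
\tr H=\langle H,\mathcal L^*(K)\rangle=\langle \mathcal L(H),K\rangle=\tr K,
$$
and we call this common value $\tau$. It is positive because $H\succ0$.

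Next comes the upper bound. Take any $X\succeq0$ with $\tr X=1$ and set $\lambda=\lambda_{\min}(\mathcal L(X))$. Then $\mathcal L(X)-\lambda\Id\succeq0$. Pairing this with $K\succ0$ gives $\langle \mathcal L(X),K\rangle\geq\lambda\tr K=\lambda\tau$. On the other hand,
$$
\langle \mathcal L(X),K\rangle=\langle X,\mathcal L^*(K)\rangle=\tr X=1.
$$
Hence $\lambda\leq 1/\tau$. For the lower bound, $X_\star=H/\tau$ is feasible: it is positive definite with trace one. It satisfies $\mathcal L(X_\star)=\Id/\tau$, so $\lambda_{\min}(\mathcal L(X_\star))=1/\tau$, and the maximum equals $1/\tau$.

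The step needing care is uniqueness. Suppose $X$ is feasible and attains $\lambda_{\min}(\mathcal L(X))=1/\tau$. Then $M=\mathcal L(X)-\Id/\tau\succeq0$. Running the same computation,
$$
\langle M,K\rangle=\langle X,\mathcal L^*(K)\rangle-\tfrac1\tau\tr K=1-1=0.
$$
Write $M=\sum_i\mu_i v_iv_i^\top$ with $\mu_i\geq0$. Then $\langle M,K\rangle=\sum_i\mu_i\,v_i^\top Kv_i$, and every term $v_i^\top Kv_i$ is strictly positive because $K\succ0$. So all $\mu_i$ vanish and $M=0$. This is where strict positivity of $K$ is essential; mere semidefiniteness would not suffice. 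It follows that $\mathcal L(X)=\Id/\tau=\mathcal L(X_\star)$, and invertibility of $\mathcal L$ gives $X=X_\star$. Note that $H\succ0$ is used only to make $X_\star$ feasible; uniqueness comes from $K\succ0$ together with the invertibility of $\mathcal L$.
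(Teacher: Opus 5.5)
Your proof is correct and follows the paper's argument essentially step for step: the trace identity via adjointness, the upper bound from pairing $\mathcal L(X)$ with $K$ so that $\tr(K\mathcal L(X))=\tr X=1$, feasibility of $H/\tau$, and uniqueness from $\tr\bigl(K(\mathcal L(X)-\Id/\tau)\bigr)=0$ with $K\succ0$, followed by invertibility of $\mathcal L$. The only difference is that you spell out, via the spectral decomposition of $M$, why a positive semidefinite $M$ with $\tr(KM)=0$ must vanish, which the paper states without detail.
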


\begin{proof}
Adjointness gives $\tr H = \langle \Id,\mathcal L^{-1}(\Id) \rangle = \langle (\mathcal L^*)^{-1}(\Id),\Id \rangle = \tr K$.
The candidate $X_\star$ is feasible and $\mathcal L(X_\star) = \Id/\tau$. Conversely, if $X$ is feasible and $\mu = \lambda_{\min}(\mathcal L(X))$, then $1 = \tr(K\mathcal L(X)) \geq \mu\tr K = \mu\tau$.
Equality forces $\tr(K(\mathcal L(X) - \Id/\tau)) = 0$. Since both factors are positive semidefinite and $K$ is positive definite, $\mathcal L(X) = \Id/\tau$, and invertibility gives $X = X_\star$.
\end{proof}

\subsection{The universal small-time obstruction}

Unlike the scalar problem, the relaxed problem has a nonsingular first-order limit as $T\to 0^+$: $\calG_T(X)/T\to X$ uniformly on bounded sets of symmetric matrices. The next theorem identifies the covariance selected by the first perturbation of this limit and gives one further order in both the optimal value and the optimizer.

\begin{theorem}[small-time relaxed design]\label{thm_relaxed_jet}
Let $S = (A + A^\top)/2$. Set $c = \tr A/n$ and $R_A = A^2 + 2AA^\top + (A^\top)^2$. As $T\to0^+$,
\begin{equation}\label{relaxed_jet}
\gamma_{\mathrm{rel}}(A,T) = \frac{T}{n} + \frac{\tr A}{n^2}T^2 + \left(\frac{(\tr A)^2}{n^3} - \frac{\tr R_A}{12n^2}\right)T^3 + \mathrm{O}(T^4).
\end{equation}
The relaxed maximizer $X_T$ is unique for every sufficiently small $T>0$, depends analytically on $T$ near zero, and satisfies
\begin{equation}\label{relaxed_optimizer}
X_T = \frac{1}{n}\bigg(\Id + T(c\Id - S) + T^2\Big(\Big(c^2 - \frac{\tr R_A}{12n}\Big)\Id - cS + \frac{1}{12}R_A\Big)\bigg) + \mathrm{O}(T^3),
\end{equation}
as $T\to 0^+$. In particular, $r(A,T) = n$ for every sufficiently small $T>0$.
\end{theorem}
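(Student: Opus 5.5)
The plan is to reduce the problem to Lemma~\ref{lem_inverse_certificate}, applied to the rescaled map $\mathcal L_T(X)=\calG_T(X)/T$ on $\operatorname{Sym}_n$. Since $\lambda_{\min}(\calG_T(X))=T\,\lambda_{\min}(\mathcal L_T(X))$, we have $\gamma_{\mathrm{rel}}(A,T)=T\max_{X}\lambda_{\min}(\mathcal L_T(X))$, and the two problems have the same maximizers.

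First I would expand $e^{tA}Xe^{tA^\top}$ in powers of $t$ and integrate over $[0,T]$. This gives a convergent power series in $T$, entire in $T$:
\begin{equation*}
\mathcal L_T=\Id+\tfrac{T}{2}M_1+\tfrac{T^2}{6}M_2+\mathrm{O}(T^3),\qquad M_1(X)=AX+XA^\top,\quad M_2(X)=A^2X+2AXA^\top+X(A^\top)^2 .
\end{equation*}
The adjoint has the same form with $A$ replaced by $A^\top$. For $T$ small, $\mathcal L_T$ and $\mathcal L_T^*$ are invertible by a Neumann series, and their inverses are analytic in $T$. Moreover $H_T=\mathcal L_T^{-1}(\Id)$ and $K_T=(\mathcal L_T^*)^{-1}(\Id)$ both equal $\Id+\mathrm{O}(T)$, so they are positive definite for small $T$.

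Lemma~\ref{lem_inverse_certificate} then applies directly. It gives a unique maximizer $X_T=H_T/\tau_T$ with $\tau_T=\tr H_T$, and the optimal value $\gamma_{\mathrm{rel}}(A,T)=T/\tau_T$. The maximizer is analytic in $T$ near zero because $\tau_T$ is analytic and $\tau_T\to n\neq0$. Since $X_T\to\Id/n\succ0$, $X_T$ is positive definite for small $T$. Because it is the unique relaxed maximizer, $r(A,T)=n$.

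The remaining work is the second-order Neumann computation, which I expect to be the only real obstacle: it is bookkeeping, and the signs and factors must match. The Neumann series gives
\begin{equation*}
H_T=\Id-\tfrac{T}{2}M_1(\Id)+T^2\big(\tfrac14M_1^2(\Id)-\tfrac16M_2(\Id)\big)+\mathrm{O}(T^3).
\end{equation*}
The terms are evaluated as follows:
\begin{itemize}
\item $M_1(\Id)=2S$.
\item $M_1(S)=AS+SA^\top=\tfrac12 R_A$, so $\tfrac14M_1^2(\Id)=\tfrac14 R_A$.
\item $M_2(\Id)=R_A$.
\end{itemize}
Hence
\begin{equation*}
H_T=\Id-TS+\tfrac{T^2}{12}R_A+\mathrm{O}(T^3).
\end{equation*}
Taking traces, and using $\tr S=\tr A=nc$, gives
\begin{equation*}
\tau_T=n-T\tr A+\tfrac{T^2}{12}\tr R_A+\mathrm{O}(T^3).
\end{equation*}

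Expanding $T/\tau_T=\tfrac{T}{n}\big(1-\tfrac{T\tr A}{n}+\tfrac{T^2\tr R_A}{12n}\big)^{-1}$ yields \eqref{relaxed_jet}; the cubic coefficient is $(\tr A)^2/n^3-\tr R_A/(12n^2)$. For the optimizer, I would write
\begin{equation*}
1/\tau_T=\tfrac1n\big(1+Tc+T^2(c^2-\tr R_A/(12n))\big)+\mathrm{O}(T^3).
\end{equation*}
Multiplying this by $H_T$ and collecting terms gives exactly \eqref{relaxed_optimizer}. The cross term $-cS\,T^2$ comes from $Tc\cdot(-TS)$, and the $\Id$ coefficient at order $T^2$ comes from the normalization.

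I would double-check the factor $\tfrac12R_A$ in $M_1(S)$, since the $\tfrac1{12}$ coefficients hinge on it. I would also check that the $\mathrm{O}(T^3)$ remainders are uniform in $T$; this follows from analyticity of the Neumann series.
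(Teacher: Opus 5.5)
Your proposal is correct and follows essentially the paper's own route. Both arguments apply Lemma~\ref{lem_inverse_certificate} to the Gramian map, use that its adjoint is the Gramian map of $A^\top$ so that both inverse images are $\Id+\mathrm{O}(T)\succ0$, and expand $\calG_T^{-1}(\Id)$ to second order via $M_1(\Id)=2S$ and $M_1^2(\Id)=R_A$. The only cosmetic difference is that the paper notes $M_2=M_1^2$, so that $T\calG_T^{-1}=\varphi(TM_1)$ with $\varphi(z)=z/(e^z-1)$. This packages your Neumann bookkeeping, since $\tfrac14-\tfrac16=\tfrac1{12}$ is the Bernoulli coefficient, and it shows that the next term vanishes. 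Your $\mathrm{O}(T^3)$ remainder in $H_T$ already suffices for both \eqref{relaxed_jet} and \eqref{relaxed_optimizer}.
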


Here uniqueness concerns the covariance $X_T$. Its realizations as input matrices through factorizations $X_T = B_TB_T^\top$ need not be unique.

\begin{proof}
As operators on $\operatorname{Sym}_n$, one has $\calG_T = T(\mathcal I + \frac{T}{2}\mathcal A + T^2\mathcal E_T)$, where $\mathcal I$ is the identity, $\mathcal A(X) = AX + XA^\top$, and $T\mapsto\mathcal E_T$ is analytic and bounded near $T = 0$.
Hence $\calG_T$ is invertible for small $T$. More precisely, the operator power series $z/(e^z - 1) = 1 - z/2 + z^2/12 + \mathrm{O}(z^4)$ gives
$$
\calG_T^{-1}(\Id) = \frac{1}{T}\Id - S + \frac{T}{12}R_A + \mathrm{O}(T^3).
$$
Here $e^{tA}Xe^{tA^\top} = e^{t\mathcal A}(X)$, so that $\calG_T = \sum_{q \geq 0}\frac{T^{q + 1}}{(q + 1)!}\mathcal A^q$ and $T\calG_T^{-1} = \varphi(T\mathcal A)$, where $\varphi(z) = z/(e^z - 1)$ is analytic on the disc $\vert z\vert<2\pi$. The displayed coefficients follow from $\mathcal A(\Id) = 2S$ and $\mathcal A^2(\Id) = R_A$, the term of order $T^2$ being absent because $\varphi$ has no cubic term.
The operator $\calG_T/T$ and its inverse are analytic near $T = 0$. The adjoint map has the same first-order expansion at $\Id$, so both inverse images appearing in Lemma~\ref{lem_inverse_certificate} are positive definite for small $T$. The lemma gives
$X_T = \frac{\calG_T^{-1}(\Id)}{\tr(\calG_T^{-1}(\Id))}$ and $\gamma_{\mathrm{rel}}(A,T) = \frac{1}{\tr(\calG_T^{-1}(\Id))}$.
Taking the trace in the refined inverse expansion gives $\tr(\calG_T^{-1}(\Id)) = n/T - \tr A + T\tr R_A/12 + \mathrm{O}(T^3)$. Expanding its reciprocal and normalizing the inverse image give \eqref{relaxed_jet} and \eqref{relaxed_optimizer}. Formula \eqref{relaxed_optimizer} makes $X_T$ positive definite for small $T$, which proves the last statement.
\end{proof}

\begin{remark}[what the higher-order terms detect]
Writing $K = (A - A^\top)/2$, one has
$R_A = 4S^2 + 2(KS - SK) = 4S^2 + AA^\top - A^\top A$ and therefore $\tr R_A = 4\tr(S^2)$. Thus the nonnormal commutator first enters the optimizer $X_T$ at order $T^2$, whereas it cancels from the optimal value through order $T^3$. These formulas also provide two checks. If $A^\top = -A$, then $R_A = 0$ and one recovers the exact identity $\gamma_{\mathrm{rel}}(A,T) = T/n$ in \eqref{skew_relaxed}, while the unique small-time relaxed optimizer is exactly $X_T = \Id/n$. If $A = A^\top$, then $R_A = 4A^2$ and the entries of $\calG_T^{-1}(\Id)$ in an eigenbasis of $A$ are $1/g_j(T) = 1/T - a_j + a_j^2T/3 + \mathrm{O}(T^3)$, in agreement with Proposition~\ref{prop_symmetric}.
\end{remark}

Since the relaxed maximizer is unique and has full rank, $\gamma_k(A,T)<\gamma_{\mathrm{rel}}(A,T)$ for every $k<n$ and every sufficiently small $T$. Thus the convexification has a strict gap at every lower rank. We now show that the scalar gap is asymptotically much larger than this rank statement alone suggests.

\subsection{Sharp small-time asymptotics at rank one}
We call a vector $b$ cyclic for $A$ if $\Span(b,Ab,\ldots,A^{n - 1}b) = \R^n$ (Kalman condition for the pair $(A,b)$), and call $A$ cyclic if it admits such a vector. For a unit cyclic vector, set
$$
d_n(A,b) = \dist(A^{n - 1}b,\Span(b,Ab,\ldots,A^{n - 2}b)),
$$
where the predecessor space is $\{0\}$ when $n=1$, and set $d_n(A,b) = 0$ when $b$ is not cyclic. Let
$$
\delta_n(A) = \max_{\Vert b\Vert = 1}d_n(A,b).
$$
For a cyclic pair, let $K_{A,b} = (b,Ab,\ldots,A^{n - 1}b)$ be its Kalman matrix, and write $K_{A,b} = QR$ for its QR factorization with positive diagonal in $R$. We denote by $q_n(A,b)$ the last column of $Q$.

The function $d_n(A,\cdot)$, extended by zero on the noncyclic set, is continuous on the unit sphere. Thus $\delta_n(A)$ is attained. If $A$ is cyclic, then $\delta_n(A)>0$, and the compact set
$$
\mathcal M(A) = \{b\in\R^n\ \mid\ \Vert b\Vert = 1,\ d_n(A,b) = \delta_n(A)\}
$$
consists of cyclic vectors. We may therefore define
\begin{equation}\label{alpha_n}
\alpha_n(A) = \max_{b\in\mathcal M(A)}q_n(A,b)^\top A q_n(A,b).
\end{equation}
This maximum is attained. It is not a second actuator-design problem: $\delta_n(A)$ is obtained by maximizing the leading coefficient of the original problem \eqref{gamma1_intro}, and $\alpha_n(A)$ resolves, at the next order, possible ties among these leading-order maximizers. No uniqueness is asserted.

\begin{theorem}[two-term optimal scalar design]\label{thm_scalar_jet}
Assume that $A$ is cyclic. As $T\to0^+$,
\begin{equation}\label{scalar_jet}
\gamma_1(A,T) = \kappa_n\, \delta_n(A)^2\, T^{2n - 1}\, (1 + \alpha_n(A)T + \mathrm{o}(T)),
\qquad
\kappa_n = \frac{((n - 1)!)^2}{(2n - 1)!(2n - 2)!}.
\end{equation}
Consequently,
\begin{equation}\label{ratio_jet}
\rho_1(A,T) = n\, \kappa_n\, \delta_n(A)^2\, T^{2n - 2} \Big(1 + (\alpha_n(A) - \frac{\tr A}{n})T + \mathrm{o}(T)\Big).
\end{equation}
\end{theorem}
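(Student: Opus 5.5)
We reduce the minimal eigenvalue of $G_T(A,b)$ to a fixed-matrix problem through the Kalman/QR factorization. Then we perform a uniform two-term expansion in $b$ and optimize. Finally, we divide by \eqref{relaxed_jet}.

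\emph{Step 1: Taylor–Kalman reduction.} Write $e^{tA}b=\sum_{j\geq0}\frac{t^j}{j!}A^jb$. Set $D_T=\diag(1,T,\ldots,T^{n-1})$. Then, with $K=K_{A,b}$,
$$
G_T(A,b)=T\,K D_T\big(H_n+T\,E(A,b)+\mathrm{O}(T^2)\big)D_TK^\top.
$$
Here $H_n=\big(\frac{1}{i!j!(i+j+1)}\big)_{0\leq i,j\leq n-1}$ is the scaled Hilbert-type moment matrix. The first correction $E$ comes from the terms $A^nb=K c(b)+\ldots$; since $A^nb\in\Span K$ for cyclic $b$, we rewrite it through the companion coefficients. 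The error is uniform for $b$ in compact sets of cyclic vectors. Next, write $K=QR$ with $R$ upper triangular. Then $RD_T=D_T\tilde R_T$, where $\tilde R_T=D_T^{-1}RD_T$ has diagonal $\diag(R)$ and off-diagonal entries $R_{ij}T^{j-i}$ for $j>i$, so that $\tilde R_T=\diag(R)+\mathrm{O}(T)$. Since $Q$ is orthogonal, $\lambda_{\min}(G_T)=\lambda_{\min}$ of $T\,D_T M_T D_T$ with $M_T=\tilde R_T(H_n+TE)\tilde R_T^\top$.

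\emph{Step 2: leading term and first correction.} For a positive definite $M$, the smallest eigenvalue of $D_TMD_T$ equals $T^{2n-2}/(M^{-1})_{nn}$ at leading order, with eigenvector concentrated on $e_n$. Its relative correction is of order $T$ and involves the $(n-1,n)$ entries. Standard Schur-complement perturbation gives
$$
\lambda_{\min}(D_TMD_T)=T^{2n-2}\Big(\frac{1}{(M^{-1})_{nn}}+\mathrm{O}(T^2)\Big)\ \text{relative},
$$
but $M_T$ itself has $\mathrm{O}(T)$ terms, so the $T$ correction comes entirely from $M_T$. At order $0$, $M_0=\diag(R)H_n\diag(R)$, and $(M_0^{-1})_{nn}=(H_n^{-1})_{nn}/R_{nn}^2$. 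Since $R_{nn}=d_n(A,b)$ and $1/(H_n^{-1})_{nn}=\kappa_n$, the classical Hilbert-type inverse computation gives the leading coefficient $\kappa_n d_n(A,b)^2T^{2n-1}$.

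For the order-$T$ term, I would identify $\frac{d}{dT}$ of $1/(M_T^{-1})_{nn}$ at $T=0$, together with the cross terms between $D_T$ scaling and $\tilde R_T$. The claim is that, after the Schur complement on the last coordinate, this contributes exactly $q_n^\top A q_n$ as relative correction. The heuristic is that the last transverse direction $q_n$ evolves as $e^{tA}q_n$, whose norm-squared grows at rate $2q_n^\top Aq_n$. Averaged against the weight $t^{2n-2}$ and normalized, this produces $q_n^\top Aq_n\cdot T$ to first order once the $\kappa_n$ moment identities are used. This identification is the main obstacle: I would verify it by computing the component of $e^{tA}b$ orthogonal to $\Span(q_1,\ldots,q_{n-1})$ directly, namely $q_n^\top e^{tA}b=d_n\frac{t^{n-1}}{(n-1)!}\big(1+\frac{t}{n}\,q_n^\top Aq_n+\ldots\big)$, while also tracking the lower components. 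Alternatively, one could use the fact that the companion-paper spectral expansion \cite{TrelatZuazua2026gramian} gives this two-term fixed-pair formula, $\lambda_{\min}(G_T(A,b))=\kappa_n d_n^2T^{2n-1}(1+q_n^\top Aq_n T+\mathrm{O}(T^2))$, locally uniformly on cyclic $b$.

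\emph{Step 3: optimization over $b$.} The upper bound is $\gamma_1\leq\max_b$ of the expansion. Near the noncyclic set, $\lambda_{\min}(G_T)\leq C\,T^{2n-1}d_n(A,b)^2+\ldots$ uniformly, by monotone domination. This is needed because the expansion is not uniform there. I would handle it by proving the crude uniform bound $\lambda_{\min}(G_T(A,b))\leq T^{2n-1}(\kappa_n d_n(A,b)^2+CT)$ on the whole sphere, using the test vector $q_n$ in the Rayleigh quotient (or any unit vector orthogonal to the first $n-1$ Krylov vectors). Maximizers of $\gamma_1$ must then satisfy $d_n(A,b_T)\to\delta_n$, so $b_T$ accumulates on $\mathcal M(A)$. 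A compactness argument on a neighbourhood of $\mathcal M(A)$ then gives
$$
\gamma_1\leq\kappa_nT^{2n-1}\big(\max_b[d_n^2(1+q_n^\top Aq_nT)]+\mathrm{o}(T)\big).
$$
Moreover, $\max_b d_n^2(1+\alpha T)=\delta_n^2(1+\alpha_nT+\mathrm{o}(T))$ by the standard envelope argument: upper semicontinuity of $b\mapsto q_n^\top Aq_n$ near $\mathcal M(A)$ bounds the $T$-term by $\alpha_n+\mathrm{o}(1)$, while $d_n^2\leq\delta_n^2$. For the lower bound, we choose $b\in\mathcal M(A)$ attaining $\alpha_n$. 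This proves \eqref{scalar_jet}. Finally, dividing by \eqref{relaxed_jet}, $\gamma_{\mathrm{rel}}=\frac{T}{n}(1+\frac{\tr A}{n}T+\mathrm{O}(T^2))$, yields \eqref{ratio_jet}.
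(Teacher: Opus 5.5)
Your overall architecture matches the paper's. You use the two-term fixed-pair expansion \eqref{fixed_pair_jet}, which you, like the paper, ultimately take from \cite{TrelatZuazua2026gramian}. You keep maximizers away from the noncyclic set with a Rayleigh-quotient bound along a unit vector orthogonal to $b,\ldots,A^{n-2}b$. You then compare with a point of $\mathcal M(A)$ realizing $\alpha_n(A)$ and divide by \eqref{relaxed_jet}.

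\textbf{Gap in Step~3.} The crude bound fails as stated, because the Rayleigh quotient at $q_n$ does not produce $\kappa_n$. Since $q_n^\top e^{tA}b=d_n\frac{t^{n-1}}{(n-1)!}+\mathrm{O}(t^n)$ uniformly, it gives
\[
q_n^\top G_T(A,b)\,q_n=\frac{d_n^2\,T^{2n-1}}{(2n-1)((n-1)!)^2}+\mathrm{O}(T^{2n})=\binom{2n-2}{n-1}^{2}\kappa_n\,d_n^2\,T^{2n-1}+\mathrm{O}(T^{2n}).
\]
This is already four times too large when $n=2$ ($T^3/3$ instead of $T^3/12$). The constant $\kappa_n$ appears only after optimizing the test vector over the whole Krylov flag, and controlling that optimization uniformly up to the noncyclic set is the very issue.

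Consequently your bound does not yield $d_n(A,b_T)\to\delta_n(A)$. It yields only $\liminf_{T\to0^+}d_n(A,b_T)\geq\delta_n(A)/\binom{2n-2}{n-1}>0$. Your ``compactness argument on a neighbourhood of $\mathcal M(A)$'' presupposes the conclusion you have not obtained.

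The repair is what the paper does:
\begin{itemize}
\item The positive lower bound confines all maximizers to $\{d_n\geq\eta\}$.
\item This is a compact set of cyclic vectors, because the zero extension of $d_n$ is continuous. You use this continuity without proof; the paper proves it via $d_n(A,b)=1/\Vert e_n^\top K_{A,b}^{-1}\Vert$ and the blow-up of this row at noncyclic limits.
\item On that set \eqref{fixed_pair_jet} is uniform, and maximizing it forces accumulation on $\mathcal M(A)$.
\end{itemize}
From there your envelope argument and the lower bound at a point of $\mathcal M(A)$ realizing $\alpha_n(A)$ are correct.

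\textbf{Step~2 heuristic.} Your self-contained attempt at the first-order term rests on a wrong heuristic. By Cayley--Hamilton, $q_n^\top A^nb=d_n\tr A$, so
\[
q_n^\top e^{tA}b=d_n\frac{t^{n-1}}{(n-1)!}\Big(1+\frac{t}{n}\tr A+\mathrm{O}(t^2)\Big),
\]
with $\tr A$ and not $q_n^\top Aq_n$. The difference $q_n^\top Aq_n-\tr A=d_n(R^{-1})_{n-1,n}$ comes from the coupling with the previous Krylov direction.

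Your Schur-complement set-up does close if you carry it out. With indices $1,\ldots,n$:
\begin{itemize}
\item $\tilde R_T^{-1}e_n=e_n/d_n+T(R^{-1})_{n-1,n}e_{n-1}+\mathrm{O}(T^2)$;
\item $(H_n^{-1})_{n-1,n}/(H_n^{-1})_{nn}=-1/2$;
\item $E=\tr A\,(ue_n^\top+e_nu^\top)$ with $u_i=1/(n!\,(i-1)!\,(n+i))$ and $(H_n^{-1}u)_n=1/2$, since the $L^2(0,1)$-projection of $t^n$ onto lower degrees has $t^{n-1}$-coefficient $n/2$.
\end{itemize}
Together these give the relative correction $\tr A+d_n(R^{-1})_{n-1,n}=q_n^\top Aq_n$. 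As written, though, Step~2 is complete only through the citation, which is also how the paper proceeds.
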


\begin{proof}
For every fixed cyclic pair, the spectral expansion proved in \cite[Theorem~1.1]{TrelatZuazua2026gramian} gives
\begin{equation}\label{fixed_pair_jet}
\lambda_{\min}(G_T(A,b)) = \kappa_n\, d_n(A,b)^2\, T^{2n - 1} (1 + q_n(A,b)^\top Aq_n(A,b)T + \mathrm{O}(T^2)).
\end{equation}
The remainder is uniform on compact families on which $d_n$ stays bounded away from zero.

We first check that the optimizing vectors remain in such a family. Let cyclic vectors $b$ converge to a noncyclic vector $b_0$. A minimal polynomial relation for $b_0$, multiplied by a suitable power of $A$, gives a vector $z\in\ker K_{A,b_0}$ whose last coordinate is nonzero. If the last row $e_n^\top K_{A,b}^{-1}$ remained bounded, a subsequence would converge to a row $w^\top$ satisfying $w^\top K_{A,b_0} = e_n^\top$. Applying this identity to $z$ would give $z_n = 0$, a contradiction. Hence the last row becomes unbounded. Since $d_n(A,b) = 1/\Vert e_n^\top K_{A,b}^{-1}\Vert$, this also proves the asserted continuity of the extension of $d_n$. Moreover, the Rayleigh quotient in the last Krylov direction and the uniform expansion
$$
\langle q_n(A,b),e^{tA}b \rangle = d_n(A,b)\frac{t^{n - 1}}{(n - 1)!} + \mathrm{O}(t^n)
$$
give, uniformly on the unit sphere,
$\lambda_{\min}(G_T(A,b)) \leq C T^{2n - 1}(d_n(A,b)^2 + T^2)$.
Comparison with any fixed cyclic actuator shows that every maximizer $b_T$ in \eqref{gamma1_intro} satisfies $d_n(A,b_T) \geq \eta>0$ for all sufficiently small $T$.

We may therefore maximize the uniform expansion \eqref{fixed_pair_jet} on one fixed compact subset of cyclic vectors. Its leading term first forces every accumulation point of $(b_T)$ to belong to $\mathcal M(A)$. Comparing the value at $b_T$ with the value at a point of $\mathcal M(A)$ realizing \eqref{alpha_n}, and then dividing the maximality inequality by $T$, shows that every accumulation point also realizes $\alpha_n(A)$. The same comparison yields \eqref{scalar_jet}, with the remainder $\mathrm{o}(T)$ displayed there. Division by \eqref{relaxed_jet} gives \eqref{ratio_jet}.
\end{proof}

\begin{remark}[small-time selection and nonuniqueness]
Assume that $A$ is cyclic. For every $T>0$, let $b_T$ be any maximizer in the definition of $\gamma_1(A,T)$. Every accumulation point $b_0$ as $T\to0^+$ belongs to $\mathcal M(A)$ and satisfies $q_n(A,b_0)^\top Aq_n(A,b_0) = \alpha_n(A)$.
Thus $\alpha_n(A)$ describes the first selection exerted by the original finite-horizon problem; it neither defines a different design problem nor implies uniqueness or convergence of the whole family $(b_T)$. Literal uniqueness is impossible because $G_T(A,-b) = G_T(A,b)$. More generally, $G_T(A,Rb) = RG_T(A,b)R^\top$ for every orthogonal matrix $R$ commuting with $A$, and the theorem asserts no uniqueness modulo these symmetries. Distinct, inequivalent maximizing families may occur. If $A$ is not cyclic, then $\gamma_1(A,T) = 0$ and every unit vector is a maximizer for every $T>0$.
\end{remark}

\medskip
{\bf The case of normal matrices.}
For normal matrices, this selection can be made completely spectral. More is true: the first displacement of the exact maximizing weights makes their dependence on the horizon explicit.

\begin{proposition}\label{prop_normal_weights}
Assume that $A$ is real and normal and has simple complex spectrum. Choose a unitary eigenbasis $(v_1,\ldots,v_n)$ of $A_\C$, ordered so that $A_\C v_k = \lambda_kv_k$ and compatible with complex conjugation. Set
$$
\chi_A(z) = \det(z\,\Id - A), \qquad h_k = \vert\chi_A'(\lambda_k)\vert, \qquad H = \sum_{k = 1}^n\frac{1}{h_k}.
$$
For a unit real actuator $b$, define its spectral weights by $\pi_j(b) = \vert \langle b,v_j \rangle \vert^2$, $j\in\{1,\ldots,n\}$. They sum to one and coincide on conjugate eigenpairs.
The unique spectral-weight vector maximizing $d_n(A,b)$, hence the leading coefficient, is given by
\begin{equation}\label{normal_weights}
\pi_j^\star = \frac{1}{Hh_j} \qquad\forall j\in\{1,\ldots,n\}.
\end{equation}
For every sufficiently small $T>0$, all maximizers $b_T$ of the original problem \eqref{gamma1_intro} have the same spectral weights $\pi_j(T) = \vert \langle b_T,v_j \rangle \vert^2$. This exact finite-horizon weight vector is unique, depends analytically on $T$, and satisfies, as $T\to 0^+$,
\begin{equation}\label{normal_weight_displacement}
\pi_j(T) = \pi_j^\star\Big(1 + \frac{\alpha_n(A) - \operatorname{Re}\lambda_j}{2}T\Big) + \mathrm{O}(T^2).
\end{equation}
Thus $(\pi_j^\star)_{1 \leq j \leq n}$ belongs to the renormalized leading problem and should not be interpreted as a distinguished maximizer at $T = 0$, when every Gramian vanishes.
Moreover,
\begin{equation}\label{normal_invariants}
\delta_n(A) = \frac{1}{H},
\qquad
\alpha_n(A) = \frac{1}{H}\sum_{j = 1}^n\frac{\operatorname{Re}\lambda_j}{h_j}.
\end{equation}
In this normal case, the remainders $\mathrm{o}(T)$ in \eqref{scalar_jet} and \eqref{ratio_jet} improve to $\mathrm{O}(T^2)$.
\end{proposition}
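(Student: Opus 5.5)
The plan is to work entirely in spectral coordinates and reduce everything to Vandermonde algebra. Write $\beta_j=\langle b,v_j\rangle$, so $\pi_j=|\beta_j|^2$. Since $A$ is normal, the Kalman matrix satisfies $V^*K_{A,b}=D_\beta W$, where $V$ is the unitary eigenbasis matrix, $D_\beta=\diag(\beta_j)$ and $W=(\lambda_j^{k-1})$ is the Vandermonde matrix. From the identity used in the proof of Theorem~\ref{thm_scalar_jet}, $d_n(A,b)=1/\Vert e_n^\top K_{A,b}^{-1}\Vert$. The last row of $W^{-1}$ consists of the coefficients $1/\chi_A'(\lambda_j)$ (Lagrange interpolation of $z^{n-1}$). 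Hence
$$
\frac{1}{d_n(A,b)^2}=\sum_j\frac{1}{h_j^2\pi_j}.
$$
This depends on $b$ only through the weights. Minimizing it under $\sum\pi_j=1$ by Cauchy--Schwarz, $(\sum 1/h_j)^2\le(\sum 1/(h_j^2\pi_j))(\sum\pi_j)$, gives the unique optimum $\pi_j\propto 1/h_j$, that is \eqref{normal_weights}, with $\delta_n(A)=1/H$. Conjugation compatibility is automatic because $h_j$ is conjugation-invariant, and every such weight vector is realized by a real $b$ once the phases are chosen conjugate. Simple spectrum makes this the only optimal weight vector.

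Next I would compute $q_n$ for the normal case. The vector $q_n$ is the normalized residual of $A^{n-1}b$, so it is proportional to the dual row $(e_n^\top K^{-1})^*$. In eigencoordinates this has components $\propto 1/(\overline{\chi_A'(\lambda_j)}\,\bar\beta_j)$. The quadratic form then gives
$$
q_n^\top Aq_n=\frac{\sum_j\operatorname{Re}\lambda_j/(h_j^2\pi_j)}{\sum_j 1/(h_j^2\pi_j)},
$$
using that the real part of a Hermitian-type form yields $\operatorname{Re}\lambda_j$. At $\pi^\star$ this equals $\frac1H\sum\operatorname{Re}\lambda_j/h_j$. Since $\mathcal M(A)$ consists exactly of the $b$ with weights $\pi^\star$, the maximum defining $\alpha_n(A)$ is this value, which proves \eqref{normal_invariants}.

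For the finite-horizon weights, the key observation is that the full Gramian depends on $b$ only through the weights modulo the symmetries commuting with $A$. Indeed $V^*G_T V=D_\beta M_T D_\beta^*$ with $(M_T)_{jk}=\int_0^T e^{t(\lambda_j+\bar\lambda_k)}dt$. Unitary diagonal phase changes conjugate this matrix, and real conjugation-compatible phase changes correspond to orthogonal maps commuting with $A$. Thus $\lambda_{\min}(G_T)=F_T(\pi)$, where $F_T(\pi)=\lambda_{\min}(D_{\sqrt\pi}M_TD_{\sqrt\pi})$.

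I would then rescale time. Conjugating by $\diag(T^{k})$ in the Kalman basis, as in the fixed-pair analysis of \cite{TrelatZuazua2026gramian}, shows that $T^{-(2n-1)}F_T(\pi)$ extends analytically in $(T,\pi)$ near $(0,\pi^\star)$. Its expansion is
$$
\kappa_n\, d_n(\pi)^2\bigl(1+a(\pi)T+O(T^2)\bigr),\qquad a(\pi)=q_n^\top Aq_n,
$$
and the smallest eigenvalue stays simple near $\pi^\star$ because the rescaled Gramian has a simple smallest eigenvalue at $T=0$. By Theorem~\ref{thm_scalar_jet}, every maximizer has weights near $\pi^\star$.

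The leading function $\pi\mapsto d_n^2$ has a nondegenerate maximum at $\pi^\star$ on the simplex, since the reciprocal sum $\sum 1/(h_j^2\pi_j)$ is strictly convex. The implicit function theorem applied to the Lagrange system therefore yields a unique, analytic critical point $\pi(T)$, which must be the global maximizer. This gives uniqueness of the weights and their analyticity in $T$.

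The first-order displacement comes from differentiating the stationarity condition. I expect this to be the main computation. At order $T$ the condition is $\nabla\log d_n^2+T\nabla a=\mu\mathbf 1$. The Hessian of $\log d_n^2=-\log\sum 1/(h_j^2\pi_j)$ at $\pi^\star$ is explicit: writing $s=\sum 1/(h_j^2\pi_j)=H^2$, the gradient is $s^{-1}/(h_j^2\pi_j^2)=H^{-2}\cdot H^2=1$. The gradient of $a$ is likewise explicit from the formula for $q_n^\top Aq_n$. Solving the resulting linear system restricted to $\sum\delta\pi_j=0$ should give $\delta\pi_j/\pi_j^\star=(\alpha_n-\operatorname{Re}\lambda_j)/2$, which is \eqref{normal_weight_displacement}. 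The sum of these corrections vanishes by the formula for $\alpha_n$, which serves as a check.

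Finally, analyticity removes the $o(T)$ remainder. Since $\nabla d_n^2(\pi^\star)$ is orthogonal to the simplex, $F_T(\pi(T))=F_T(\pi^\star)+O(T^{2n+1})$. This gives the $O(T^2)$ relative remainder in \eqref{scalar_jet}, and combining it with \eqref{relaxed_jet} gives the same improvement in \eqref{ratio_jet}.
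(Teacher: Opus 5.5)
Your proposal is correct and follows essentially the same route as the paper. Both arguments reduce the problem to spectral weights through diagonal phase symmetries, use the Lagrange-interpolation formulas $d_n^{-2}=\sum_j 1/(h_j^2\pi_j)$ and $q_n^\top Aq_n=d_n^2\sum_j\operatorname{Re}\lambda_j/(h_j^2\pi_j)$, apply Cauchy--Schwarz, take joint analyticity from the companion paper, and combine the nondegenerate Hessian $-2H\sum_j h_j\xi_j^2$ with the implicit function theorem. The displacement computation you left at ``should give'' does hold: in the differentiated stationarity system the correction gradient is $\alpha_n(A)-\operatorname{Re}\lambda_j$ and the multiplier derivative vanishes. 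The one point the paper makes explicit and you leave implicit is that real actuators confine the weights to the conjugation-fixed simplex. This is harmless, because the objective is invariant under the conjugation involution, so stationarity there is still one equation per index with a common multiplier.
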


\begin{proof}
In the chosen eigenbasis, diagonal phase changes commute with $A_\C$ and conjugate one Gramian into another. Hence its eigenvalues depend on $b$ only through $\pi_j = \vert \langle b,v_j \rangle \vert^2$. For a cyclic vector, Lagrange interpolation gives
$$
d_n(A,b)^2 = \bigg(\sum_{j = 1}^n\frac{1}{\pi_jh_j^2}\bigg)^{-1}, \qquad q_n(A,b)^\top Aq_n(A,b) = d_n(A,b)^2\sum_{j = 1}^n\frac{\operatorname{Re}\lambda_j}{\pi_jh_j^2}.
$$
The Cauchy-Schwarz inequality under $\sum_j\pi_j = 1$ shows that the leading term has the unique maximizing weight vector $\pi^\star$, and gives \eqref{normal_invariants}. For real actuators, the admissible simplex is the fixed set of the involution exchanging conjugate eigenvalues: $\pi_j = \pi_k$ whenever $\lambda_k = \overline{\lambda_j}$. Since $h_j = h_k$ on every conjugate pair, $\pi^\star$ belongs to this simplex. The exact normalized objective is invariant under the same involution, so at every fixed point its gradient has equal components on paired indices. Stationarity on the fixed simplex can therefore be written with one equation per index and one common multiplier, the equations on each conjugate pair being identical.

It remains to determine the displacement. Write $r_j = \operatorname{Re}\lambda_j$. For $\pi$ near the interior point $\pi^\star$ in the admissible simplex, choose a representative actuator $b(\pi)$ by analytic square roots in the real eigenlines and fixed phases in the rotation planes; every such nearby weight is positive, so $b(\pi)$ is cyclic and depends analytically on $\pi$. By \cite[Theorem~1.1]{TrelatZuazua2026gramian}, the function $T^{-(2n - 1)}\lambda_{\min}(G_T(A,b))$ extends jointly real analytically in $(T,A,b)$ near every controllable pair at $T = 0$. Composing with $\pi\mapsto b(\pi)$ shows that $(T,\pi)\mapsto T^{-(2n - 1)}\lambda_{\min}(G_T(A,b(\pi)))$ is real analytic near $(0,\pi^\star)$, so that the expansion \eqref{fixed_pair_jet} holds locally in the $C^2$ topology with respect to $\pi$.

At $\pi^\star$, the Hessian of $\log d_n(A,b)^2$ on the tangent space $\sum_j\xi_j = 0$ is
$$
D^2\log d_n^2(\pi^\star)[\xi,\xi] = -2H\sum_{j = 1}^nh_j\xi_j^2,
$$
and the gradient of the correction term has components $\alpha_n(A) - r_j$, modulo the constraint. The leading maximum is therefore nondegenerate. The analytic implicit function theorem applied to the normalized smallest eigenvalue gives a unique analytic critical branch near $\pi^\star$, which remains a strict local maximum for small $T$. Differentiating its stationarity equations at $T = 0$ yields
$-2Hh_j\pi_j'(0) + \alpha_n(A) - r_j = \mu$,
where $\mu$ is the derivative of the Lagrange multiplier. Summing these equations after division by $h_j$, and using $\sum_j\pi_j'(0) = 0$ and the second identity in \eqref{normal_invariants}, gives $\mu = 0$. Since $\pi_j^\star = 1/(Hh_j)$, this proves \eqref{normal_weight_displacement}.

Finally, the uniform leading expansion and the uniqueness of $\pi^\star$ force every globally maximizing weight vector to converge to $\pi^\star$ as $T\to0^+$. For small $T$, all global maximizers therefore lie in the neighborhood where the nondegenerate critical branch is unique. Joint analyticity also gives the stated $\mathrm{O}(T^2)$ improvements.
\end{proof}

When the spectrum is real, formula \eqref{normal_weights} is exactly the unique $c$-optimal weight allocation for estimating the leading coefficient in polynomial regression on the prescribed nodes $\lambda_1,\ldots,\lambda_n$; this is the finite-support instance of Elfving's theory \cite{Elfving1952}. Its role here is different: it is the unique leading spectral weight of the control problem, from which the exact finite-horizon branch and its displacement are obtained.

\begin{remark}[on the uniqueness of the normal maximizers]
Let $\mathcal Z_O(A) = \{R\in O(n)\mid RA = AR\}$. Fix a sufficiently small $T>0$ and let $b_T^0$ be one unit-vector maximizer of the scalar problem \eqref{gamma1_intro}. Then the full set of unit-vector maximizers of that problem is
$$
\{b\in\R^n\ \mid\ \Vert b\Vert = 1,\ \lambda_{\min}(G_T(A,b)) = \gamma_1(A,T)\} = \{Rb_T^0\,\mid\, R\in\mathcal Z_O(A)\}.
$$
Indeed, the commutant preserves the objective, and two real vectors with the same spectral weights differ by independent signs on the real eigenlines and rotations in the conjugate-pair planes. Thus the weight vector is unique, whereas the actuator is unique only modulo the orthogonal commutant. If the spectrum contains $r$ real eigenvalues and $q$ nonreal conjugate pairs, then $\mathcal Z_O(A)$ is isomorphic to $\{-1,1\}^r\times(S^1)^q$. In particular, a symmetric matrix with simple spectrum has $2^n$ maximizing vectors, or $2^{n - 1}$ unoriented actuator lines. An arbitrary choice of $b_T$ need not converge as $T\to0^+$ even though its unique weight vector $\pi(T)$ is analytic and convergent.
\end{remark}

\medskip
{\bf The small-time relaxation gap.}
Theorems~\ref{thm_relaxed_jet} and~\ref{thm_scalar_jet} reveal a sharp mismatch between the full-rank relaxed design and the best rank-one design. For every matrix $A$, the relaxed maximizer has rank $n$ at sufficiently small horizons. If $A$ is cyclic, one scalar actuator achieves only the fraction $\rho_1(A,T)\sim n\,\kappa_n\,\delta_n(A)^2\,T^{2n - 2}$ of the relaxed value. Equivalently, the best worst-case $L^2$ control norm obtainable with one actuator exceeds the relaxed benchmark by a factor equivalent to $\frac{1}{\sqrt{n\kappa_n}\delta_n(A)}T^{-(n - 1)}$.
At the level of Gramian performance, the relaxed-to-scalar ratio grows like $T^{-2n + 2}$. If $A^\top = -A$, then $\alpha_n(A) = \tr A = 0$, so the relative term of order $T$ in \eqref{ratio_jet} vanishes.

\medskip
{\bf Addition of a scalar drift.}
Replacing $A$ by $A + a\Id$, with $a\in\R$, leaves every Krylov space and every height $d_n(A,b)$ unchanged, and replaces $q_n(A,b)^\top Aq_n(A,b)$ by $q_n(A,b)^\top Aq_n(A,b) + a$, as observed in \cite{TrelatZuazua2026gramian} at the level of the whole spectrum. Hence $\delta_n(A + a\Id) = \delta_n(A)$ and $\alpha_n(A + a\Id) = \alpha_n(A) + a$. Since $\tr(A + a\Id)/n = \tr A/n + a$, both the leading coefficient and the first relative correction in \eqref{ratio_jet} are invariant. Thus, up to the order resolved here, the relaxation ratio detects only the anisotropic part of the drift, although the scalar and relaxed values separately depend on $a$. For normal matrices, formulas \eqref{normal_weights} and \eqref{normal_weight_displacement} show in addition that the leading optimal weights and their first displacement are unchanged.

\subsection{Rank-dependent small-time hierarchy}
The dichotomy between rank one and full rank is in fact a hierarchy. At every rank, the small-time scale is determined by the Kalman step of the input covariance. For a covariance $X$, set $\mathcal K_j(A,X) = \sum_{q = 0}^{j - 1}A^q\operatorname{Ran}X$ for $j \geq 1$, with $\mathcal K_0(A,X) = \{0\}$, and let
$$
s_k(A) = \min\{j \geq 1\ \mid\ \exists X\succeq0,\ \tr X = 1,\ \rank X \leq k,\ \mathcal K_j(A,X) = \R^n\}
$$
be the least Kalman step attainable with at most $k$ input directions, with $s_k(A) = +\infty$ when no admissible covariance is controllable.

\begin{proposition}[small-time gap at every rank]\label{prop_rank_gap}
Let $k\in\{1,\ldots,n\}$. If $s_k(A) = +\infty$, then $\gamma_k(A,T) = 0$ for every $T>0$. Otherwise, writing $s = s_k(A)$, there exist $T_0>0$ and $0<c_\star \leq C_\star$ such that
\begin{equation}\label{rank_gap}
c_\star \, T^{2s - 1} \leq \gamma_k(A,T) \leq C_\star \, T^{2s - 1}\qquad\forall T\in(0,T_0].
\end{equation}
Moreover, there exists a constant $c_k(A)>0$ such that
\begin{equation}\label{rank_gap_equivalent}
\gamma_k(A,T)\sim c_k(A)\,T^{2s - 1}
\qquad\hbox{as }T\to0^+.
\end{equation}
Consequently, $\frac{\gamma_k(A,T)}{\gamma_{\mathrm{rel}}(A,T)} \sim n\, c_k(A)\,T^{2s - 2}$.
If $A$ is cyclic, then $s_k(A) = \lceil n/k\rceil$.
\end{proposition}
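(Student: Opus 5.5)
The two-sided bound \eqref{rank_gap} and the cyclic formula are elementary. The existence of the limit constant $c_k(A)$ is the delicate part. I would proceed in four steps.

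\emph{Trivial case and upper bound.} If $s_k(A)=+\infty$, every admissible $X$ has $\mathcal K_j(A,X)\neq\R^n$ for all $j$. Then the reachable space $\sum_q A^q\operatorname{Ran}X$ is proper, $\calG_T(X)$ is singular, and $\gamma_k(A,T)=0$. Otherwise set $s=s_k(A)$. For the upper bound, fix any admissible $X=BB^\top$ of rank at most $k$. By minimality of $s$, $\mathcal K_{s-1}(A,X)\neq\R^n$, so there is a unit $v\perp\mathcal K_{s-1}(A,X)$. Then $B^\top(A^\top)^qv=0$ for $q\le s-2$, and Taylor expansion of $e^{tA^\top}v$ gives $\Vert B^\top e^{tA^\top}v\Vert\le C t^{s-1}$. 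The constant $C$ depends only on $\Vert A\Vert$, since $\Vert B\Vert\le1$ and $t\le T_0$. Integrating,
$$
\lambda_{\min}(\calG_T(X))\le v^\top\calG_T(X)v\le C_\star T^{2s-1},
$$
uniformly in $X$.

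\emph{Lower bound.} Fix one admissible $X_0$ with $\mathcal K_s(A,X_0)=\R^n$. Let $T^{-(2s-1)}\lambda_{\min}(\calG_T(X_0))$ tend to a positive constant. To see this, rescale $t=T\tau$ and work in a basis adapted to the filtration $\mathcal K_1\subset\cdots\subset\mathcal K_s$, with diagonal weights $T^{j}$ on the $j$-th graded piece. The rescaled Gramian converges to a block Gramian of the associated graded (Brunovsk\'y-type) system with monomial inputs $\tau^q/q!$. That Gramian is positive definite by linear independence of monomials, and the smallest weight $T^{s-1}$ yields the exponent $2s-1$. This is the multi-input analogue of the fixed-pair expansion of \cite{TrelatZuazua2026gramian}. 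It gives $c_\star$ and, more precisely, a limit $\ell(X)=\lim_{T\to0}T^{-(2s-1)}\lambda_{\min}(\calG_T(X))\in[0,\infty)$ for each fixed admissible $X$, with $\ell(X)=0$ when the Kalman step of $X$ exceeds $s$.

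\emph{Existence of $c_k(A)$.} Set $c_k(A)=\sup\ell(X)$ over admissible $X$; it is positive and finite by the previous two steps. The pointwise limits give $\liminf_{T\to0} T^{-(2s-1)}\gamma_k(A,T)\ge c_k(A)$. The main obstacle is the reverse inequality, since maximizers $X_T$ may drift toward covariances whose Kalman step degenerates. I would take $T_m\to0$ realizing the $\limsup$, extract $X_{T_m}\to X_0$ by compactness, and prove the joint upper semicontinuity
$$
\limsup_{T\to0,\ X\to X_0}T^{-(2s-1)}\lambda_{\min}(\calG_T(X))\le\ell(X_0).
$$
First I would try test vectors: take the minimizing direction for the limiting graded problem at $X_0$, lifted through the $T$-weighted change of basis. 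Then I would absorb the perturbation $X-X_0$ into the same weighted Taylor bound used for the upper estimate. Any components of $X-X_0$ in lower filtration levels only enlarge $\ell$ through terms that are $o(1)$ after weighting. When $X_0$ has step larger than $s$, the argument of the upper-bound step applied near $X_0$ should give $0$. With this, $\gamma_k(A,T)\sim c_k(A)T^{2s-1}$. Dividing by $\gamma_{\mathrm{rel}}(A,T)=T/n+\mathrm O(T^2)$ from Theorem~\ref{thm_relaxed_jet} gives the ratio statement.

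\emph{Cyclic case.} Since $\dim\mathcal K_j(A,X)\le jk$ when $\rank X\le k$, controllability forces $jk\ge n$, so $s_k(A)\ge\lceil n/k\rceil$. Conversely, let $b$ be cyclic and $s=\lceil n/k\rceil$. Take $X$ proportional to $\sum_{i=0}^{k-1}w_iw_i^\top$ with $w_i=A^{is}b$, keeping only the indices with $is\le n-1$. Then $\mathcal K_s(A,X)\supseteq\Span(A^jb:0\le j\le n-1)=\R^n$, so $s_k(A)=\lceil n/k\rceil$.
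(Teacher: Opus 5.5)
Your trivial case, uniform upper bound, graded-rescaling lower bound, ratio statement and cyclic computation $s_k(A)=\lceil n/k\rceil$ are correct and essentially match the paper, which cites Seidman for the fixed-covariance lower bound. The gap is in the existence of $c_k(A)$. You set $c_k(A)=\sup_X\ell(X)$ and reduce the reverse inequality to the joint upper semicontinuity $\limsup_{(T,X)\to(0,X_0)}T^{1-2s}\lambda_{\min}(\calG_T(X))\leq\ell(X_0)$. This inequality is false in general. It fails exactly where you treat $X-X_0$ as an $o(1)$ term after weighting.

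Take $n=3$, $k=2$, so $s=2$, and the cyclic shift $Ae_1=e_2$, $Ae_2=e_3$, $Ae_3=0$. The covariance $X_0=e_1e_1^\top$ has Kalman step $3$, hence $\ell(X_0)=0$. For fixed $\theta>0$, the admissible rank-two covariances $X_T=(1-\theta T^2)e_1e_1^\top+\theta T^2e_3e_3^\top$ converge to $X_0$ and satisfy
\[
\calG_T(X_T)=(1-\theta T^2)\int_0^Tu(t)u(t)^\top\,dt+\theta T^3e_3e_3^\top,
\qquad u(t)=(1,t,t^2/2)^\top.
\]
Factor out $\diag(T^{1/2},T^{3/2},T^{5/2})$ on both sides and invert (Sherman--Morrison on the $e_3$ term). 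This gives $T^{3}\calG_T(X_T)^{-1}\to\diag(0,12,1/\theta)$, hence $T^{-3}\lambda_{\min}(\calG_T(X_T))\to\min(\theta,1/12)>0=\ell(X_0)$. In general, a perturbation of size $\eta$ that brings a top-level direction into $\operatorname{Ran}X$ adds about $\eta T$ to the Gramian in that direction. After weighting this is $\eta T^{2-2s}$, which is of order one when $\eta\sim T^{2s-2}$. Your remark that the upper-bound argument near a point of larger Kalman step ``should give $0$'' fails for the same reason.

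This is the mechanism the paper points to after its proof: $T$-dependent designs approaching a locus where the Kalman growth profile degenerates. The paper states that whether $c_k(A)=\sup_X\ell(X)$ holds at intermediate ranks is open, so your route would have to settle a question the paper leaves unresolved. In the example above the joint limit happens not to exceed $\sup_X\ell(X)$, but nothing in your argument controls this in general.

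The paper instead proves that the limit exists without identifying it. The admissible set is compact and semialgebraic, cut out by the vanishing $(k+1)$-minors. After rescaling a compact time interval, the entries of $(T,X)\mapsto\calG_T(X)$ are restricted analytic functions, and $\lambda_{\min}$ is semialgebraic. Definability is preserved under maximization over a definably compact set, so $T\mapsto T^{1-2s}\gamma_k(A,T)$ is definable in the o-minimal structure of restricted analytic functions. By \eqref{rank_gap} it stays in $[c_\star,C_\star]$, so the monotonicity theorem yields a finite positive limit as $T\to0^+$. Your pointwise limits survive only as the lower bound $c_k(A)\geq\sup_X\ell(X)$, which is exactly how the paper uses them.
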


\begin{proof}
If $s_k(A) = +\infty$, every admissible pair is uncontrollable, hence every corresponding Gramian is singular and $\gamma_k(A,T) = 0$. Assume from now on that $s = s_k(A)<+\infty$.

Let $X = BB^\top$ be admissible in \eqref{gammam}, normalized by $\tr(B^\top B) = 1$, so that $\Vert B\Vert \leq 1$. Since $\operatorname{Ran}B = \operatorname{Ran}X$, the spaces $\mathcal K_j(A,X)$ are precisely the multi-input Kalman spaces generated by $B$; in particular, the Kalman step depends on $X$ and not on the chosen factorization. By minimality of $s$, one has $\mathcal K_{s - 1}(A,X)\neq\R^n$; let $v$ be a unit vector orthogonal to it. Then $v^\top A^qB = 0$ for every $q \leq s - 2$, hence $\Vert B^\top e^{tA^\top}v\Vert \leq \frac{t^{s - 1}(1 + \Vert A\Vert)^{s - 1}}{(s - 1)!}e^{t\Vert A\Vert}$ and
$$
\lambda_{\min}(\calG_T(X)) \leq \int_0^T\Vert B^\top e^{tA^\top}v\Vert^2dt \leq \frac{(1 + \Vert A\Vert)^{2s - 2}e^{2T\Vert A\Vert}}{(2s - 1)((s - 1)!)^2}\,T^{2s - 1}.
$$
The right-hand side does not depend on $X$, which gives the upper bound in \eqref{rank_gap}.

For the lower bound, fix an admissible covariance $X_0 = B_0B_0^\top$ such that $\mathcal K_s(A,\allowbreak X_0) = \R^n$. By minimality of $s$, the pair $(A,B_0)$ has Kalman step exactly $s$; equivalently, $s - 1$ is the least integer $K$ with $\rank[B_0,AB_0,\ldots,A^KB_0] = n$. The classical fixed-pair estimate of \cite[(4.1), (4.2), (4.3)]{Seidman1988} then gives $\lambda_{\min}(\calG_T(X_0))^{-1/2} \sim \gamma_0\,T^{-(s - 1/2)}$ with $\gamma_0 > 0$, that is, $\lambda_{\min}(\calG_T(X_0)) \sim \gamma_0^{-2}\,T^{2s - 1}$. This yields the lower bound in \eqref{rank_gap}.

Finally, $\dim\mathcal K_j(A,X) \leq jk$ when $\rank X \leq k$, and hence $s_k(A) \geq \lceil n/k\rceil$. Conversely, assume that $A$ is cyclic, choose a cyclic vector $b$, and set $s = \lceil n/k\rceil$ and $m = \lceil n/s\rceil \leq k$. The input matrix $B = (b,A^sb,\ldots,A^{(m - 1)s}b)$ has rank at most $k$, and the powers generated by $B,AB,\ldots,A^{s - 1}B$ contain $b,Ab,\ldots,A^{n - 1}b$. After normalization, $X = BB^\top/\tr(B^\top B)$ is admissible and satisfies $\mathcal K_s(A,X) = \R^n$. Thus $s_k(A) = \lceil n/k\rceil$.

It remains to prove the existence of the limit in \eqref{rank_gap_equivalent}. The admissible set in \eqref{gammam} is compact and semialgebraic, the rank constraint being the vanishing of all minors of size $k + 1$. After an affine rescaling of a compact time interval and of a box containing the admissible set, the entries of $(T,X)\mapsto\calG_T(X)$ are restricted analytic functions, while the smallest eigenvalue is semialgebraic as a function of the entries of a symmetric matrix. It follows that $T\mapsto\gamma_k(A,T)$ is definable, with parameters depending on the fixed matrix $A$, in the o-minimal structure of restricted analytic functions; here we use stability of definability under maximization over a definably compact set. Therefore $T\mapsto \frac{\gamma_k(A,T)}{T^{2s - 1}}$
is definable and, by \eqref{rank_gap}, remains in a compact subinterval of $(0,+\infty)$. The o-minimal monotonicity theorem \cite{vandenDries1998} implies that it has a finite positive limit as $T\to0^+$, which is $c_k(A)$.
\end{proof}

For a cyclic matrix, $s_k(A) - 1 = \lceil n/k\rceil - 1$ is the least achievable maximal Kalman depth with at most $k$ input directions. The power-law exponent in the ratio therefore decreases from $2n - 2$ at rank one to $0$ at full rank; it equals $2$ whenever $\lceil n/2\rceil \leq k<n$. The coefficient $c_k(A)$ also admits a lower bound without any cyclicity assumption. Fix $k$ such that $s = s_k(A) < +\infty$. For every admissible covariance $X$ of rank at most $k$ and Kalman step $s$, the fixed-pair estimate used in the preceding proof shows that the limit $c(A,X) = \lim_{T\to0^+}T^{1 - 2s}\lambda_{\min}(\calG_T(X))$ exists and is positive. This constant is independent of the factorization of $X$, since the Gramian itself depends only on $X$, and Seidman's graded construction gives an explicit formula for it \cite[(3.4), (4.1), (4.2), (4.3)]{Seidman1988}. Since $\gamma_k(A,T) \geq \lambda_{\min}(\calG_T(X))$ for every such fixed covariance, taking limits and then the supremum gives $c_k(A) \geq \sup_X c(A,X)$, where $X$ ranges over admissible covariances of rank at most $k$ and Kalman step $s_k(A)$. At $k = n$, for every $A$ one has $s_n(A) = 1$ and $c(A,X) = \lambda_{\min}(X)$, so the supremum is $1/n = c_n(A)$. When $A$ is cyclic, the lower bound is also exact at $k = 1$: \eqref{fixed_pair_jet} gives $c(A,bb^\top) = \kappa_n\,d_n(A,b)^2$, hence its supremum is $\kappa_n\,\delta_n(A)^2 = c_1(A)$ by Theorem~\ref{thm_scalar_jet}. Whether equality persists at intermediate ranks is open; the issue is whether $T$-dependent designs approaching a locus where the Kalman growth profile degenerates can exploit nonuniformity in the fixed-pair asymptotics. The fixed-pair scale $T^{2s - 1}$ is classical \cite{Seidman1988}, as is the role of controllability indices \cite{Brunovsky1970}. The design-level estimate \eqref{rank_gap} follows here from an upper bound uniform over all covariances of rank at most $k$, combined with one fixed covariance attaining the least possible Kalman step.

\subsection{Symmetric matrices: a permanent obstruction}
The preceding obstruction to exact low-rank recovery is universal but confined to small time. We now show that, for symmetric matrices, it persists at every horizon $T>0$.

\begin{proposition}[unique relaxed maximizer for symmetric matrices]\label{prop_symmetric}
Let $A = A^\top$ and $n \geq 2$. Choose any orthonormal eigenbasis $(e_1,\ldots,e_n)$ such that $Ae_j = a_je_j$, and set $g_j(T) = \int_0^T e^{2a_jt}\,dt$.
Then
\begin{equation}\label{symmetric_relaxed}
\gamma_{\mathrm{rel}}(A,T) = \bigg(\sum_{j = 1}^n\frac{1}{g_j(T)}\bigg)^{-1}.
\end{equation}
In this eigenbasis, the unique relaxed maximizer is
\begin{equation}\label{symmetric_covariance}
X_\star = \diag\left(\frac{\gamma_{\mathrm{rel}}(A,T)}{g_1(T)},\ldots,\frac{\gamma_{\mathrm{rel}}(A,T)}{g_n(T)}\right).
\end{equation}
Consequently, $r(A,T) = n$ and $\gamma_k(A,T)<\gamma_{\mathrm{rel}}(A,T)$ for every $k<n$ and every $T>0$.
\end{proposition}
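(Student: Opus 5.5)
We will apply the inverse certificate, Lemma~\ref{lem_inverse_certificate}, to the operator $\mathcal L=\calG_T$ acting on $\operatorname{Sym}_n$. Everything is computed in the chosen orthonormal eigenbasis, where $e^{tA}=\diag(e^{a_1t},\ldots,e^{a_nt})$. In that basis the operator acts entrywise:
$$
\calG_T(X)_{ij}=X_{ij}\int_0^T e^{(a_i+a_j)t}\,dt=X_{ij}\,m_{ij}(T).
$$
Each coefficient satisfies $m_{ij}(T)>0$ for $T>0$, since it is the integral of a positive function. Hence $\calG_T$ is a Schur (Hadamard) multiplier with positive entries, and it is invertible on $\operatorname{Sym}_n$ with inverse $Y\mapsto (Y_{ij}/m_{ij})$. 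Because $A=A^\top$ and $m$ is symmetric, the operator is self-adjoint for the trace inner product: $\tr(Y\calG_T(X))=\sum_{ij}Y_{ij}m_{ij}X_{ij}$. Therefore $\mathcal L^*=\mathcal L$, and the two inverse images in the lemma coincide.

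Next compute $H=\calG_T^{-1}(\Id)$. Only diagonal entries are involved, and $m_{jj}=g_j(T)$, so
$$
H=\diag\bigl(1/g_1(T),\ldots,1/g_n(T)\bigr),
$$
which is positive definite. Likewise $K=H\succ0$. The lemma then gives the optimal value $1/\tau$ with $\tau=\sum_j 1/g_j(T)$, which is \eqref{symmetric_relaxed}. It also gives the unique maximizer $X_\star=H/\tau$, which is \eqref{symmetric_covariance}. Uniqueness is part of the lemma's conclusion. Independence from the choice of eigenbasis needs no separate argument: when eigenvalues repeat, $X_\star$ is a function of $A$ (namely $\calG_T^{-1}(\Id)/\tau$), so it is basis-free.

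For the consequence, every diagonal entry of $X_\star$ is strictly positive because $g_j(T)\in(0,\infty)$. Hence $\rank X_\star=n$. Since $X_\star$ is the only relaxed maximizer, \eqref{recovery_rank} gives $r(A,T)=n$. The equivalence \eqref{rank_equivalence} then gives $\gamma_k(A,T)<\gamma_{\mathrm{rel}}(A,T)$ for every $k<n$. The hypothesis $n\geq2$ only ensures that such $k\geq1$ exist.

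I do not expect a serious obstacle. The one point to check carefully is the self-adjointness, and hence the positivity of $K$. This is immediate from the symmetry of the multiplier $m_{ij}$, which in turn relies on $A$ being symmetric, so that the same orthonormal basis diagonalizes both $e^{tA}$ and $e^{tA^\top}$.
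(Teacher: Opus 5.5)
Your proof is correct. It differs from the paper's only in packaging.

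You invoke Lemma~\ref{lem_inverse_certificate} for the self-adjoint Schur multiplier $\calG_T$, with certificate $H=K=\diag(1/g_j(T))$. The paper does not call the lemma here and argues directly. First it bounds $\lambda_{\min}(\calG_T(X))\leq\min_j g_j(T)X_{jj}\leq\bigl(\sum_j 1/g_j(T)\bigr)^{-1}$. For uniqueness, it observes that a maximizer must satisfy $X_{jj}=\gamma_{\mathrm{rel}}(A,T)/g_j(T)$. Then $\calG_T(X)-\gamma_{\mathrm{rel}}(A,T)\Id$ is positive semidefinite with zero diagonal, hence zero, and the positive off-diagonal multipliers force $X_{jk}=0$.

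Specialized to this operator, the lemma's proof is exactly that argument. The identity $\tr(K\calG_T(X))=\sum_j X_{jj}=1$ is the weighted average hidden in the paper's min-over-diagonal bound. The step ``$\tr(K(\calG_T(X)-\mu\Id))=0$ with $K\succ0$'' replaces the zero-diagonal argument.

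What each version buys:
\begin{itemize}
\item Your version is shorter. It also shows that the symmetric case is the same certificate as in Theorem~\ref{thm_relaxed_jet}, now valid for every $T>0$ rather than only small $T$, because the multiplier is positive and symmetric.
\item The paper's version is self-contained. It makes visible that only the diagonal of the Gramian drives the optimal value, and that nonvanishing of the off-diagonal multipliers enters only at the uniqueness step. In your version that step is hidden in the invertibility hypothesis of the lemma.
\end{itemize}
Your observation that $X_\star=\calG_T^{-1}(\Id)/\tau$ is intrinsically defined is a slightly cleaner way to handle repeated eigenvalues than the paper's remark about coinciding diagonal coefficients.
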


\begin{proof}
For every feasible covariance $X$, written in the chosen eigenbasis,
$$
\lambda_{\min}(\calG_T(X)) \leq \min_jg_j(T)X_{jj} \leq \bigg(\sum_{j = 1}^n\frac{1}{g_j(T)}\bigg)^{-1}.
$$
The covariance \eqref{symmetric_covariance} has trace one and satisfies $\calG_T(X_\star) = \gamma_{\mathrm{rel}}(A,T)\,\Id$, proving \eqref{symmetric_relaxed}.

If another covariance $X$ were a maximizer, then $\calG_T(X)\succeq\gamma_{\mathrm{rel}}(A,T)\,\Id$. Its diagonal entries would give $X_{jj} \geq \gamma_{\mathrm{rel}}(A,T)/g_j(T)$. Their lower bounds already sum to one, so equality holds for every $j$. The positive semidefinite matrix $\calG_T(X) - \gamma_{\mathrm{rel}}(A,T)\,\Id$ has zero diagonal and is therefore zero. Since its off-diagonal entries are $X_{jk}\int_0^T e^{(a_j + a_k)t}\,dt$ and the integral is positive, $X_{jk} = 0$ for $j\neq k$. Thus $X = X_\star$, which is positive definite. Formula \eqref{rank_equivalence} gives the last assertion. If eigenvalues are repeated, the corresponding diagonal coefficients in \eqref{symmetric_covariance} coincide, so the formula is independent of the eigenbasis chosen inside each eigenspace.
\end{proof}

We have found two distinct obstructions to exact low-rank recovery. For an arbitrary matrix, full rank is forced at sufficiently small horizons; for a symmetric matrix, it is forced at every horizon. We now turn to skew-symmetric matrices, for which time averaging can instead close the gap.

\section{Skew-symmetric dynamics: exact recovery}\label{sec_skew}
We now ask when a low-rank input covariance $X$, and in particular a rank-one covariance $X = bb^\top$ generated by a single actuator, attains the relaxed maximum.

Assume throughout this section that $A^\top = -A$. The matrix $e^{tA}$ is orthogonal, and thus $\tr(e^{tA}Xe^{tA^\top}) = \tr(e^{tA}e^{tA^\top}X) = \tr X$.
Hence every covariance of trace one satisfies $\tr\calG_T(X) = T$. It follows that $\lambda_{\min}(\calG_T(X)) \leq T/n$, whereas $X = \Id/n$ gives equality. Therefore
\begin{equation}\label{skew_relaxed}
\gamma_{\mathrm{rel}}(A,T) = \frac{T}{n}.
\end{equation}
Since a positive semidefinite matrix whose smallest eigenvalue equals its average eigenvalue is scalar,
\begin{equation}\label{isotropic_equivalence}
\lambda_{\min}(\calG_T(X)) = \frac{T}{n} \ \Leftrightarrow\ \calG_T(X) = \frac{T}{n}\Id.
\end{equation}
Thus exact recovery of the relaxed maximum, equivalently the absence of a gap at the prescribed rank, means that time averaging makes the input covariance isotropic. In control terms, the minimum control energy is then the same in every unit target direction.

The planar rotation makes this isotropic averaging explicit. Let
$$
J = \begin{pmatrix}0&-1\\1&0\end{pmatrix}.
$$

\begin{example}\label{ex_planar_rotation}
Let $A = \omega J$, with $\omega\neq0$. Then
$\lambda_{\min}(G_T(\omega J,b)) = \frac{T}{2} - \frac{\vert\sin(\omega T)\vert}{2\vert\omega\vert}$, for every unit vector $b$.
Every direction is a maximizer. The relaxed value is $T/2$, and scalar recovery is exact precisely at the half-periods $T\in\frac{\pi}{\vert\omega\vert}\N^*$. At these horizons the rotating line generated by $b$ averages equally in both planar directions.
\end{example}

\subsection{Recovery with one actuator}

Recall that a matrix is cyclic when it admits a cyclic vector. For a real skew-symmetric matrix this is equivalent to simplicity of its complex spectrum. In that case there is an orthogonal decomposition
\begin{equation}\label{real_blocks}
\R^n = V_0\oplus\bigoplus_{\ell = 1}^L V_\ell,
\end{equation}
where $V_0 = \ker A$ has dimension zero or one, every $V_\ell$ is a two-dimensional invariant plane, and $A_{\vert V_\ell} = \omega_\ell J$ for distinct positive frequencies $\omega_1,\ldots,\omega_L$. We denote the corresponding orthogonal projectors by $P_0,P_1,\ldots,P_L$.

\begin{theorem}[exact scalar recovery]\label{thm_scalar_recovery}
Let $A^\top = -A$. Then $\gamma_1(A,T) = \gamma_{\mathrm{rel}}(A,T)$ if and only if $A$ is cyclic and $e^{TA} = \sigma\,\Id$ for some $\sigma\in\{-1,1\}$. If $\ker A\neq\{0\}$, necessarily $\sigma = 1$.

When these conditions hold, the unit actuators maximizing the scalar problem \eqref{gamma1_intro} are exactly those whose modal weights, namely the fractions of the squared actuator norm carried by the invariant frequency subspaces in \eqref{real_blocks}, satisfy
\begin{equation}\label{optimal_modal_weights}
\Vert P_0b\Vert^2 = \frac{1}{n}\quad\hbox{if }\dim V_0 = 1,
\qquad
\Vert P_\ell b\Vert^2 = \frac{2}{n}\quad\forall \ell\in\{1,\ldots,L\}.
\end{equation}
The maximizing modal weights are unique, but the actuator is not: its direction inside each rotation plane and its sign on $V_0$ are arbitrary. Every such actuator satisfies $G_T(A,b) = T\,\Id/n$.
\end{theorem}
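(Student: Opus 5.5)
By \eqref{skew_relaxed} and \eqref{isotropic_equivalence}, scalar recovery $\gamma_1(A,T)=\gamma_{\mathrm{rel}}(A,T)$ holds if and only if some unit vector $b$ satisfies $G_T(A,b)=T\,\Id/n$. The plan is to write this isotropy condition entrywise in a complex eigenbasis and read off both the characterization and the maximizers. Since $A$ is real skew-symmetric, $A_\C$ is normal with purely imaginary spectrum. I will choose a unitary eigenbasis $(v_1,\ldots,v_n)$ with $A_\C v_j=i\omega_jv_j$, $\omega_j\in\R$, compatible with complex conjugation, so that eigenvalues come in pairs $\pm i\omega$ and possibly include $0$. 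Writing $\beta_j=\langle b,v_j\rangle$, one has $e^{tA}b=\sum_j\beta_je^{i\omega_jt}v_j$. Hence the matrix of $G_T(A,b)$ in this basis has entries
$$
\beta_j\overline{\beta_k}\,I_{jk}(T),\qquad I_{jk}(T)=\int_0^Te^{i(\omega_j-\omega_k)t}\,dt.
$$
Isotropy $G_T=T\,\Id/n$ is therefore equivalent to two families of conditions: $T\vert\beta_j\vert^2=T/n$ for every $j$, and $\beta_j\overline{\beta_k}I_{jk}(T)=0$ for every $j\neq k$.

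For the necessity direction, I first use the diagonal conditions. They force every $\beta_j\neq0$, so each off-diagonal condition becomes $I_{jk}(T)=0$. If two distinct indices had $\omega_j=\omega_k$, then $I_{jk}(T)=T\neq0$, which is a contradiction. Hence the spectrum is simple, which is equivalent to cyclicity of $A$, as recalled before \eqref{real_blocks}. For $\omega_j\neq\omega_k$, the condition $I_{jk}(T)=0$ means $e^{i(\omega_j-\omega_k)T}=1$. So all endpoint phases $e^{i\omega_jT}$ share a common value $\sigma$. Because the spectrum is invariant under $\omega\mapsto-\omega$, this $\sigma$ satisfies $\sigma=\overline{\sigma}$, so $\sigma\in\{-1,1\}$ and $e^{TA}=\sigma\,\Id$. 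If $0$ is an eigenvalue, its phase is $1$, which forces $\sigma=1$. The one case to watch is $n=1$, where $A=0$: it is cyclic with $\sigma=1$, and both sides hold trivially.

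For sufficiency, assume $A$ is cyclic and $e^{TA}=\sigma\,\Id$. Then all phases $e^{i\omega_jT}$ equal $\sigma$, and the spectrum is simple, so $I_{jk}(T)=0$ for every $j\neq k$. Consequently $G_T(A,b)=T\,\Id/n$ holds exactly when $\vert\beta_j\vert^2=1/n$ for all $j$. The same computation, without assuming isotropy, also identifies all maximizers: at such a horizon the off-diagonal terms vanish automatically, so $G_T(A,b)=T\diag(\vert\beta_j\vert^2)$ and $\lambda_{\min}(G_T(A,b))=T\min_j\vert\beta_j\vert^2$. This equals $T/n$ if and only if all weights equal $1/n$.

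It then remains to translate these weights into the real decomposition \eqref{real_blocks}. On $V_0$, the single weight gives $\Vert P_0b\Vert^2=1/n$. On each plane $V_\ell$, the conjugate pair gives $\Vert P_\ell b\Vert^2=\vert\beta_j\vert^2+\vert\beta_{\bar j}\vert^2=2/n$, and for a real $b$ the two conjugate weights are automatically equal. This yields \eqref{optimal_modal_weights}. The stated freedom in the actuator, namely an arbitrary direction inside each rotation plane and an arbitrary sign on $V_0$, is exactly the set of real vectors with these prescribed block norms.

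The main obstacle is the necessity direction. There one must make sure that the vanishing of the off-diagonal terms really forces simple spectrum and a common phase, rather than allowing a cancellation among several entries. The entrywise argument above handles this because the diagonal conditions make every $\beta_j$ nonzero, so no product $\beta_j\overline{\beta_k}$ can kill an entry.
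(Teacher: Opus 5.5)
Your proof is correct. Your sufficiency argument and your translation to the real blocks \eqref{real_blocks} match the paper's, but your necessity argument takes a different route.

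In the necessity direction, the paper does not expand the isotropy condition entrywise. It uses the finite-time Lyapunov identity \eqref{lyapunov_identity}, whose left-hand side vanishes when $G_T=T\,\Id/n$ and $A^\top=-A$. This gives $Ubb^\top U^\top=bb^\top$, hence $Ub=\sigma b$ with $U=e^{TA}$. Cyclicity of $b$ follows from positive definiteness of the Gramian. Then $UA^kb=\sigma A^kb$ on a Krylov basis forces $U=\sigma\,\Id$, and $\sigma=1$ is read off on $\ker A$.

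You instead extract everything from the single matrix $(\beta_j\overline{\beta_k}I_{jk}(T))$:
\begin{itemize}
\item the diagonal makes every $\beta_j$ nonzero;
\item a repeated eigenvalue would produce an off-diagonal entry $T\beta_j\overline{\beta_k}\neq0$;
\item the remaining off-diagonal entries force a common endpoint phase, which is real by the symmetry $\omega\mapsto-\omega$.
\end{itemize}

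What each route buys:
\begin{itemize}
\item Your version handles both directions with one computation. It gets simplicity of the spectrum directly rather than through the Kalman condition. It is in effect the rank-one case of the block constraints \eqref{block_constraints} used for Theorem~\ref{thm_phase_rank}.
\item The paper's version is basis-free, and it is the one that becomes quantitative in Theorem~\ref{thm_rigidity}. There the identity $Ubb^\top U^\top-bb^\top=[A,G]$ turns an approximate isotropy defect directly into an approximate return $Ub\approx\sigma b$. The error is of order $\Vert A\Vert T\varepsilon$ and does not depend on spectral gaps.
\end{itemize}
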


This includes $n = 1$, where $A = 0$ and $G_T(A,b) = T$ for every unit actuator.

\begin{proof}
Assume first that a unit vector $b$ attains the relaxed value. By \eqref{isotropic_equivalence}, $G_T(A,b) = T\,\Id/n$. Set $U = e^{TA}$. The finite-time Lyapunov identity is
\begin{equation}\label{lyapunov_identity}
AG_T + G_TA^\top = Ubb^\top U^\top - bb^\top.
\end{equation}
Its left-hand side vanishes, so $Ubb^\top U^\top = bb^\top$ and $Ub = \sigma b$ for some $\sigma\in\{-1,1\}$. The Gramian is positive definite, hence $b$ is cyclic for $A$. Since $U$ commutes with $A$, $UA^kb = A^kUb = \sigma A^kb$ for every $k \geq 0$. The Krylov vectors span $\R^n$, and therefore $U = \sigma\,\Id$. On $\ker A$, the endpoint map is the identity, which excludes $\sigma = -1$ when the kernel is nontrivial.

Conversely, suppose that $A$ is cyclic and $e^{TA} = \sigma\,\Id$. In the complexification choose unit eigenvectors $v_\omega$ satisfying $A_\C v_\omega = i\omega v_\omega$ and $v_{-\omega} = \overline{v_\omega}$, together with a real unit vector of $V_0$ when the kernel is present. All signed frequencies are distinct. The endpoint condition implies $\int_0^T e^{i(\omega - \nu)t}\,dt = 0$ for distinct frequencies $\omega$ and $\nu$. Thus all cross-correlations vanish. Conditions \eqref{optimal_modal_weights} put squared coefficient $1/n$ on every complex eigendirection, so all diagonal entries of the Gramian equal $T/n$. They are also necessary, which proves the characterization.
\end{proof}

\begin{remark}[only the modal weights matter]
For cyclic skew-symmetric $A$, the spectrum of $G_T(A,b)$ depends on $b$ only through $\Vert P_0b\Vert^2,\Vert P_1b\Vert^2,\ldots,\Vert P_Lb\Vert^2$. Indeed, rotations in the planes $V_\ell$, and a sign on $V_0$, commute with $A$ and conjugate the corresponding Gramians orthogonally. Hence the maximization defining $\gamma_1(A,T)$ reduces to a simplex of modal weights; directions inside the planes do not affect its value. At a scalar recovery horizon, all maximizers form one orbit under the orthogonal commutant of $A$. Away from resonance, neither uniqueness of the maximizing weight vector nor a closed formula is asserted.
\end{remark}

\subsection{The recovery rank}
If one actuator cannot attain $T/n$, how many are needed? This is precisely the recovery rank defined in \eqref{recovery_rank}. Partial resonances lead to intermediate answers. For example, let $A = J\oplus2J$ on $\R^4$. At a generic horizon the four signed frequencies have distinct endpoint phases and $r(A,T) = 4$. At $T = \pi$, the phases form two classes and $r(A,T) = 2$. At $T = 2\pi$, all phases coincide and $r(A,T) = 1$.

To state the general formula, let $A_\C$ be the complexification of an arbitrary real skew-symmetric matrix and write
\begin{equation}\label{spectral_decomposition}
\C^n = \bigoplus_{\omega\in\Omega}E_\omega,
\qquad
E_\omega = \ker(A_\C - i\omega\Id),
\qquad
\mu_\omega = \dim_\C E_\omega.
\end{equation}
At time $T$, group the frequencies by their endpoint phases:
$$
\Omega_\zeta(T) = \{\omega\in\Omega\,\mid\, e^{i\omega T} = \zeta\}, \qquad \zeta\in\sigma(e^{TA_\C}),
$$
and set
\begin{equation}\label{phase_multiplicity}
m_\zeta(T) = \max_{\omega\in\Omega_\zeta(T)}\mu_\omega.
\end{equation}
Different phase classes require orthogonal input subspaces. Inside one class, the smallest possible input dimension is the largest spectral multiplicity occurring there. These two facts give the following exact formula.

\begin{theorem}[recovery rank at every horizon]\label{thm_phase_rank}
Let $A^\top = -A$ and $T>0$. Then
\begin{equation}\label{rank_formula}
r(A,T) = \sum_{\zeta\in\sigma(e^{TA_\C})}m_\zeta(T).
\end{equation}
Equivalently, for every $k\in\{1,\ldots,n\}$,
$$
\gamma_k(A,T) = \gamma_{\mathrm{rel}}(A,T) = \frac{T}{n} \ \Leftrightarrow\ k \geq \sum_{\zeta\in\sigma(e^{TA_\C})}m_\zeta(T).
$$
In particular, $r(A,T) = n$ if all endpoint phases are distinct. If all phases coincide, then $r(A,T) = \max_\omega\mu_\omega$.
\end{theorem}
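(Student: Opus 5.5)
By \eqref{rank_equivalence}, \eqref{isotropic_equivalence} and \eqref{skew_relaxed}, it suffices to determine the least rank of a covariance $X\succeq0$ with $\tr X=1$ and $\calG_T(X)=T\,\Id/n$. I will work in the complexification. Write $X$ in block form $X_{\omega\nu}=P_\omega XP_\nu$, where $P_\omega$ is the orthogonal projector onto $E_\omega$ in \eqref{spectral_decomposition}. Since $e^{tA_\C}$ acts on $E_\omega$ by $e^{i\omega t}$ and $A_\C$ is normal, the blocks of the Gramian are
$$
\calG_T(X)_{\omega\nu}=\bigg(\int_0^Te^{i(\omega-\nu)t}\,dt\bigg)X_{\omega\nu}.
$$
The integral equals $T$ when $\omega=\nu$. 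For $\omega\neq\nu$ it vanishes exactly when $e^{i(\omega-\nu)T}=1$, that is, when $\omega$ and $\nu$ lie in the same phase class $\Omega_\zeta(T)$. Isotropy is therefore equivalent to three conditions: $X_{\omega\omega}=\Id_{E_\omega}/n$ for every $\omega$; $X_{\omega\nu}=0$ whenever $\omega,\nu$ lie in different phase classes; and no constraint on cross blocks inside a class. The problem thus reduces to a rank minimization over $X$, block diagonal with respect to the class decomposition $\C^n=\bigoplus_\zeta F_\zeta$, $F_\zeta=\bigoplus_{\omega\in\Omega_\zeta}E_\omega$, with prescribed diagonal blocks $\Id/n$. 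The remaining constraint is that $X$ be real symmetric, i.e.\ invariant under complex conjugation, which maps $E_\omega$ to $E_{-\omega}$.

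\emph{Lower bound.} Block-diagonality gives $\rank X=\sum_\zeta\rank X_{F_\zeta}$. In each class, the compression of $X_{F_\zeta}$ to any $E_\omega$ is $\Id/n$, which has rank $\mu_\omega$. Since the rank of a positive semidefinite matrix dominates the rank of any compression, $\rank X_{F_\zeta}\geq m_\zeta(T)$. Rank is unchanged by complexification, so this bounds the real rank as well.

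\emph{Upper bound.} In each class, set $m=m_\zeta(T)$, choose isometries $W_\omega:E_\omega\to\C^m$ (possible since $\mu_\omega\leq m$), and put $X_{F_\zeta}=\frac1n W^*W$ with $W=(W_\omega)_{\omega\in\Omega_\zeta}$. This has rank at most $m$ and diagonal blocks $W_\omega^*W_\omega/n=\Id/n$. Summing over classes gives a Hermitian $X$ of rank $\sum_\zeta m_\zeta$ satisfying the isotropy conditions.

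The main obstacle is reality. Conjugation maps $\Omega_\zeta$ to $\Omega_{\bar\zeta}$. For a pair of classes with $\bar\zeta\neq\zeta$, I build the block on $F_\zeta$ and define the block on $F_{\bar\zeta}$ as its conjugate; this preserves both the diagonal constraints and the rank count. For self-conjugate classes ($\zeta=\pm1$), the class is closed under $\omega\mapsto-\omega$, and I need a conjugation-invariant rank-$m$ construction. I would choose the isometries compatibly: $W_{-\omega}=\overline{W_\omega}$ composed with the conjugation $E_{-\omega}=\overline{E_\omega}$, using real coordinates on $\C^m$. For $\omega=0$, $E_0$ is the complexification of $\ker A$, and I take $W_0$ to be a real isometry. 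Then $W^*W$ is invariant under conjugation, hence real symmetric. Within a self-conjugate class the frequencies $\pm\omega$ both occur with equal multiplicity, so the maximum $m$ is unaffected. Both sign classes can be handled uniformly this way. An alternative is to invoke the real phase-block construction promised in the appendix.

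The particular cases follow directly from \eqref{rank_formula}. If all phases are distinct, the sum is $\sum_\omega\mu_\omega=n$. If all phases coincide, there is a single class and $r(A,T)=\max_\omega\mu_\omega$.
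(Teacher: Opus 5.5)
Your proposal is correct and follows essentially the same route as the paper. The paper likewise uses the blockwise identity $P_\omega\calG_T(X)P_\nu=\bigl(\int_0^Te^{i(\omega-\nu)t}\,dt\bigr)P_\omega XP_\nu$ to reduce isotropy to the phase-class block constraints, gets the lower bound because each class block has rank at least that of its diagonal compressions (phrased there via a Gram factorization), and builds the upper bound from conjugation-compatible isometries, which is exactly your construction and that of Lemma~\ref{lem_real_realization}.
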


\begin{proof}
Let $P_\omega$ be the orthogonal projector onto $E_\omega$. In a unitary eigenbasis of $A_\C$,
one has
$P_\omega\calG_T(X)P_\nu = \Phi_T(\omega - \nu)P_\omega XP_\nu$, where $\Phi_T(s) = \int_0^T e^{ist}dt$.
Now $\Phi_T(0) = T$, while for $s\neq0$ it vanishes exactly when $e^{isT} = 1$. Hence $\calG_T(X) = T\,\Id/n$ is equivalent to
\begin{equation}\label{block_constraints}
P_\omega XP_\omega = \frac{1}{n}P_\omega,
\qquad
P_\omega XP_\nu = 0
\quad\hbox{if }e^{i\omega T}\neq e^{i\nu T}.
\end{equation}
Thus $X$ is an orthogonal sum over endpoint phase classes. Factor one phase block of $nX$ as a Gram matrix. Its diagonal restrictions assign an orthonormal family of size $\mu_\omega$ to every $E_\omega$, so its rank is at least $m_\zeta(T)$. Ranks of distinct phase blocks add. Conversely, Lemma~\ref{lem_real_realization} constructs a real covariance satisfying \eqref{block_constraints} with rank equal to the sum in \eqref{rank_formula}. The result follows from \eqref{isotropic_equivalence} and \eqref{rank_equivalence}.
\end{proof}

\begin{remark}[complete-period orbit frames]
If $e^{TA} = \pm\Id$, all endpoint phases coincide and \eqref{rank_formula} reduces to the largest spectral multiplicity. This is the circle-representation instance of the compact-group multiplicity rule in \cite[Theorem~6.1]{Iverson2018}. Formula \eqref{rank_formula} adds the sum over the phase classes which remain distinct on an arbitrary finite window.
\end{remark}

\begin{remark}[rank does not imply uniqueness]
Theorem~\ref{thm_phase_rank} determines the least recovery rank, not a unique recovering covariance. If all endpoint phases are distinct, \eqref{block_constraints} forces $X = \Id/n$, which is the unique relaxed maximizer. If a phase class contains several frequencies, the cross-blocks inside that class are not seen by the averaging operator and recovering covariances are generally nonunique.
\end{remark}

\section{Near recovery and long-time averaging}\label{sec_near}

Theorem~\ref{thm_scalar_recovery} shows that exact scalar recovery is resonant. Approximate recovery raises two separate questions. Is exact recovery stable at a fixed horizon? Can the relaxation ratio approach one without approaching a prescribed resonance? The next two results answer both questions.

\begin{theorem}[rigidity of near recovery]\label{thm_rigidity}
Let $A^\top = -A$, let $b$ be a unit vector, and assume that
\begin{equation}\label{near_optimal}
\lambda_{\min}(G_T(A,b)) \geq (1 - \varepsilon)\frac{T}{n},
\end{equation}
for some $\varepsilon\in[0,1)$.
Let $P_0$ be the orthogonal projector onto $\ker A$. Then $(A,b)$ is controllable, and there exists $\sigma\in\{-1,1\}$ such that
\begin{equation}\label{near_return}
\Vert e^{TA} - (P_0 + \sigma(\Id - P_0))\Vert
\leq \sqrt{\frac{n}{1 - \varepsilon}}\Vert A\Vert T\varepsilon.
\end{equation}
If $\dim\ker A = 1$ and the right-hand side of \eqref{near_return} is smaller than $\sqrt2$, then necessarily $\sigma = 1$.
\end{theorem}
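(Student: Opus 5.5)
The plan is to turn near-isotropy of the Gramian into a near-return of a single vector through the Lyapunov identity \eqref{lyapunov_identity}. Since $e^{TA}$ commutes with the flow, that single-vector estimate can then be spread to an operator-norm estimate using the Gramian itself. Write $G=G_T(A,b)$, $U=e^{TA}$ and $c=T/n$.

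\emph{Step 1: controllability and near-isotropy.} Controllability is immediate from \eqref{near_optimal}, since $\varepsilon<1$ forces $G\succ0$. By \eqref{skew_relaxed}, $\tr G=T$. Every eigenvalue of $G$ is at least $(1-\varepsilon)c$, so every eigenvalue is at most $c+(n-1)\varepsilon c$. Hence the spectrum of $Y=G-c\,\Id$ lies in an interval of length $n\varepsilon c=\varepsilon T$.

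\emph{Step 2: near-return of $b$.} Because $A^\top=-A$, the left side of \eqref{lyapunov_identity} equals the commutator $AY-YA$. This commutator is unchanged if $Y$ is shifted by a multiple of $\Id$. Centering the spectrum of $Y$ gives a shifted matrix of norm at most $\varepsilon T/2$, and therefore
$$
\Vert Ubb^\top U^\top-bb^\top\Vert\le\Vert A\Vert T\varepsilon .
$$
For unit vectors $u=Ub$ and $b$, the left side equals the sine of the angle between the two lines. Let $\sigma$ be the sign of $\langle Ub,b\rangle$. Then $\delta=\Vert Ub-\sigma b\Vert$ is controlled by $\Vert A\Vert T\varepsilon$.

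The bookkeeping of constants here is where I expect the main difficulty. The crude bound $2-2|\cos\theta|\le2\sin^2\theta$ loses a factor $\sqrt2$. To reach the stated constant, I would either use the sharper identity $2-2|\cos\theta|=2\sin^2\theta/(1+|\cos\theta|)$, or work directly with the projected vector $(\Id-P_0)b$. Note that $U$ fixes $P_0b$. Consequently $Mb=(U-\sigma)(\Id-P_0)b$, where $M=U-(P_0+\sigma(\Id-P_0))$.

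\emph{Step 3: spreading to an operator bound.} The matrix $M$ commutes with every $e^{tA}$, so $Me^{tA}b=e^{tA}Mb$. This vector has norm $\Vert Mb\Vert$ for every $t$, because $e^{tA}$ is orthogonal. Therefore
$$
\Vert MG^{1/2}\Vert_F^2=\tr(MGM^\top)=\int_0^T\Vert Me^{tA}b\Vert^2dt=T\Vert Mb\Vert^2 .
$$
On the other side, $\Vert MG^{1/2}\Vert_F^2\ge\lambda_{\min}(G)\Vert M\Vert^2\ge(1-\varepsilon)\frac Tn\Vert M\Vert^2$. Combining the two gives $\Vert M\Vert\le\sqrt{n/(1-\varepsilon)}\,\Vert Mb\Vert$, which together with Step 2 yields \eqref{near_return}. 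Before fixing $\sigma$ in Step 2, I must check that $Mb$, and not merely $(U-\sigma)b$, is the quantity controlled there. When $\sigma=-1$, the kernel component of $b$ is exactly what must be excluded.

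\emph{Step 4: the sign when $\dim\ker A=1$.} Suppose $\sigma=-1$ while the bound is below $\sqrt2$. Then every nonzero frequency has $e^{i\omega T}$ close to $-1$. I would test $G$ on the cross-block between the kernel vector $v_0$ and a complex eigenvector $v_\omega$, as in the proof of Theorem~\ref{thm_phase_rank}. That entry equals $\overline{\langle b,v_0\rangle}\,\langle b,v_\omega\rangle\,(e^{i\omega T}-1)/(i\omega)$.

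Near-isotropy forces this entry to be small, of order $\varepsilon T$. The diagonal entries force both modal weights to be at least $(1-\varepsilon)/n$. Meanwhile $|e^{i\omega T}-1|$ is bounded below, because $e^{i\omega T}$ is close to $-1$. I expect these three facts to contradict one another once the return defect is below $\sqrt2$. The alternative is a direct geometric argument on the unit circle: $e^{i\omega T}$ cannot be within distance less than $\sqrt2$ of both $-1$ and the phase forced by the cross-correlation.
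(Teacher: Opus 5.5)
Your architecture is the paper's: the Lyapunov identity, centering the spectrum of $G$, the line-to-vector sign bound, and then spreading the estimate for $b$ to all of $e^{TA}$. The identity $\tr(M^\top MG)=T\Vert Mb\Vert^2$ in Step~3 is a neat coordinate-free substitute for the paper's block-by-block argument. Your worry there is harmless: $(U-\sigma)b=Mb+(1-\sigma)P_0b$ is an orthogonal splitting, so $\Vert Mb\Vert\le\Vert Ub-\sigma b\Vert$.

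\textbf{First gap: the constant.} As written, your chain proves \eqref{near_return} only with $\sqrt{2n/(1-\varepsilon)}$ in place of $\sqrt{n/(1-\varepsilon)}$, and neither repair you propose recovers the missing $\sqrt2$. The sharper identity gives $\Vert Ub-\sigma b\Vert^2=2\sin^2\theta/(1+|\cos\theta|)$, which is strictly larger than $\sin^2\theta$ for $\theta\neq0$. So knowing only $\sin\theta\le\Vert A\Vert T\varepsilon$ can never give $\Vert Ub-\sigma b\Vert\le\Vert A\Vert T\varepsilon$. Projecting changes nothing when $\sigma=1$, since then $Mb=(U-\Id)b$.

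The factor must come from Step~3, namely from the two-dimensionality of the rotation planes. The decomposition \eqref{real_blocks} is available because $G\succ0$ makes $A$ cyclic. On each $V_\ell$, $M$ restricts to $(\cos\omega_\ell T-\sigma)\Id+\sin(\omega_\ell T)J$, a multiple of a rotation, and $M=0$ on $\ker A$. Hence the singular values of $M$ come in equal pairs, and $\Vert M\Vert_F^2\ge2\Vert M\Vert^2$ rather than merely $\Vert M\Vert_F^2\ge\Vert M\Vert^2$. With this, your identity gives $\Vert M\Vert^2\le\frac{n}{2(1-\varepsilon)}\Vert Mb\Vert^2\le\frac{n}{1-\varepsilon}\Vert A\Vert^2T^2\varepsilon^2$, which is exactly \eqref{near_return}. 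This is the same factor the paper uses, in the form $\pi_\ell=\tr(P_\ell G)/T\ge2(1-\varepsilon)/n$ (since $P_\ell$ has rank two), combined with $\Vert Ub-\sigma b\Vert^2\ge\pi_\ell\Vert U_{\vert V_\ell}-\sigma\Id_{V_\ell}\Vert^2$.

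\textbf{Second gap: Step~4 fails as sketched.} Write $R$ for the right-hand side of \eqref{near_return}. The three facts you list give only $|e^{i\omega T}-1|\le\frac{n|\omega|T\varepsilon}{2(1-\varepsilon)}\le\frac12\sqrt{\frac{n}{1-\varepsilon}}\,R$. Combined with $|e^{i\omega T}+1|\le R$ and $|e^{i\omega T}-1|^2+|e^{i\omega T}+1|^2=4$, this is contradictory only if $R^2\big(1+\frac{n}{4(1-\varepsilon)}\big)<4$. That condition is strictly stronger than $R<\sqrt2$ as soon as $n\ge5$, so the cross-block entry loses a factor of order $\sqrt n$.

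What works is the observation you already made in Step~3: keep the sign $\sigma$ of $\langle Ub,b\rangle$ fixed in Step~2. Suppose $\sigma=-1$. Since $U$ fixes $P_0b$ and preserves $(\ker A)^\perp$, we get $\Vert Ub+b\Vert^2\ge4\Vert P_0b\Vert^2=4\tr(P_0G)/T\ge4(1-\varepsilon)/n$. Step~2, on the other hand, gives $\Vert Ub+b\Vert^2\le2\Vert A\Vert^2T^2\varepsilon^2$. These two bounds are incompatible exactly when $R<\sqrt2$. If you also want to exclude every $\sigma=-1$ satisfying \eqref{near_return}, not just the constructed one, note that for $A\neq0$ the bound cannot hold for both signs when $R<\sqrt2$, by the same identity $|z-1|^2+|z+1|^2=4$ for $|z|=1$.
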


\begin{proof}
Set $G = G_T(A,b)$ and $U = e^{TA}$. Since $\tr G = T$, assumption \eqref{near_optimal} implies
$\lambda_{\max}(G) - \lambda_{\min}(G) \leq T\varepsilon$.
In particular, $G$ is positive definite, so $(A,b)$ is controllable and $A$ is cyclic. The decomposition \eqref{real_blocks} is therefore available. If $\pi_\ell = \Vert P_\ell b\Vert^2$, then $\tr(P_\ell G) = T\pi_\ell$. The lower bound on $G$ gives $\pi_\ell \geq \frac{2(1 - \varepsilon)}{n}$ on every rotation plane.
If $\dim\ker A = 1$, the same argument gives $\pi_0 = \Vert P_0b\Vert^2 \geq (1 - \varepsilon)/n$.

The Lyapunov identity \eqref{lyapunov_identity} and skew-symmetry give $Ubb^\top U^\top - bb^\top = [A,G]$.
After subtracting from $G$ the midpoint of its extreme eigenvalues,
\begin{equation}\label{projector_defect}
\Vert Ubb^\top U^\top - bb^\top\Vert \leq \Vert A\Vert T\varepsilon.
\end{equation}
For unit vectors $v,w$, there is a sign $\sigma$ such that $\Vert v - \sigma w\Vert \leq \sqrt2\Vert vv^\top - ww^\top\Vert$.
Apply this to $v = Ub$ and $w = b$. On each $V_\ell$, the quantity $\Vert(U - \sigma\,\Id)v\Vert$ is independent of the unit vector $v\in V_\ell$. Combining this observation with $\pi_\ell \geq \frac{2(1 - \varepsilon)}{n}$ and \eqref{projector_defect} gives
$\Vert Ub - \sigma b\Vert^2 \geq \pi_\ell\Vert U_{\vert V_\ell} - \sigma\,\Id_{V_\ell}\Vert^2$
and therefore
$\Vert U_{\vert V_\ell} - \sigma\,\Id_{V_\ell}\Vert \leq \sqrt{\frac{n}{1 - \varepsilon}}\Vert A\Vert T\varepsilon$
for every $\ell$. Since $U = \Id$ on $\ker A$, taking the maximum over the orthogonal blocks proves \eqref{near_return}.

If the kernel is one-dimensional and $\sigma = -1$, then
$\Vert Ub + b\Vert^2 \geq 4\pi_0 \geq \frac{4(1 - \varepsilon)}{n}$,
whereas \eqref{projector_defect} gives $\Vert Ub + b\Vert^2 \leq 2\Vert A\Vert^2T^2\varepsilon^2$. These inequalities are incompatible when the right-hand side of \eqref{near_return} is smaller than $\sqrt2$.
\end{proof}

If $A$ is invertible, \eqref{near_return} says directly that the endpoint flow is close to $\Id$ or $-\Id$. If the kernel is one-dimensional, it is close to the identity as soon as the displayed smallness condition holds. The point of the estimate is its scale: a defect $\varepsilon = \mathrm{o}(1/(\Vert A\Vert T))$ forces an actual near return of the flow.

Read in the contrapositive, Theorem~\ref{thm_rigidity} produces a lower bound on the scalar gap at every horizon, resonant or not. Let
$$
D(A,T) = \min_{\sigma\in\{-1,1\}}\Vert e^{TA} - (P_0 + \sigma(\Id - P_0))\Vert
$$
be the distance from the endpoint flow to the two comparison maps in Theorem~\ref{thm_rigidity}. It vanishes at every scalar recovery horizon. The converse holds when $A$ is cyclic and invertible. Without cyclicity, a return does not make any scalar pair controllable; if $\ker A\neq\{0\}$, the branch $\sigma = -1$ may also vanish although scalar recovery is impossible.

\begin{corollary}[quantitative gap away from resonance]\label{cor_gap}
Let $A^\top = -A$ be nonzero and let $T>0$. Then
\begin{equation}\label{gap_lower_bound}
1 - \rho_1(A,T) \geq
\frac{2D(A,T)}{D(A,T) + \sqrt{D(A,T)^2 + 4n\Vert A\Vert^2T^2}}.
\end{equation}
\end{corollary}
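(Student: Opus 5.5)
The plan is to read Theorem~\ref{thm_rigidity} in the contrapositive, applied to an optimal scalar actuator, and then solve the resulting inequality for the gap. Set $\varepsilon = 1 - \rho_1(A,T)\in[0,1]$, and abbreviate $D = D(A,T)$ and $a = \Vert A\Vert T>0$; here $a>0$ because $A\neq0$ and $T>0$.

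First I dispose of the degenerate case $\varepsilon = 1$. Since $D\geq0$ and $a>0$, we have $\sqrt{D^2 + 4na^2}>D$, so the right-hand side of \eqref{gap_lower_bound} is strictly less than $1$. The inequality therefore holds trivially when $\varepsilon = 1$.

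Now assume $\varepsilon<1$. The admissible set is compact, so there is a unit maximizer $b$ with $\lambda_{\min}(G_T(A,b)) = \gamma_1(A,T)$. By \eqref{skew_relaxed}, $\gamma_1(A,T) = \rho_1(A,T)\,T/n = (1-\varepsilon)T/n$. Thus \eqref{near_optimal} holds with this $\varepsilon\in[0,1)$. Theorem~\ref{thm_rigidity} then supplies a sign $\sigma$ satisfying \eqref{near_return}. Since $D$ is the minimum over both signs, it is bounded by the left-hand side of \eqref{near_return}, which gives
$$
D \leq \sqrt{\frac{n}{1-\varepsilon}}\,a\,\varepsilon .
$$
The extra condition in the theorem forcing $\sigma = 1$ when the kernel is one-dimensional is not needed here, because $D$ already allows either sign.

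Squaring this bound gives $D^2(1-\varepsilon)\leq na^2\varepsilon^2$, that is,
$$
p(\varepsilon) := na^2\varepsilon^2 + D^2\varepsilon - D^2 \geq 0 .
$$
The polynomial $p$ is a convex quadratic with $p(0) = -D^2\leq0$. Hence $p(\varepsilon)\geq0$ together with $\varepsilon\geq0$ forces $\varepsilon$ to be at least the nonnegative root
$$
\varepsilon_+ = \frac{-D^2 + \sqrt{D^4 + 4na^2D^2}}{2na^2} = \frac{D\big(\sqrt{D^2+4na^2} - D\big)}{2na^2}.
$$
Rationalizing the numerator by multiplying and dividing by $\sqrt{D^2+4na^2} + D$ gives
$$
\varepsilon_+ = \frac{2D}{D + \sqrt{D^2 + 4na^2}},
$$
which is exactly the right-hand side of \eqref{gap_lower_bound}. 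When $D = 0$ the bound reads $\varepsilon\geq0$, which is trivial, so no division issue arises.

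I expect no real obstacle beyond this algebra. The only points needing care are the trivial case $\varepsilon = 1$, where Theorem~\ref{thm_rigidity} does not apply, and the fact that $D$ is a minimum over both signs, so that the sign produced by the theorem can be compared with it directly.
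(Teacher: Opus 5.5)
Your proposal is correct and follows essentially the same route as the paper: take a scalar maximizer, apply Theorem~\ref{thm_rigidity} with $\varepsilon = 1-\rho_1(A,T)$ to get $D(A,T)^2(1-\varepsilon)\leq n\Vert A\Vert^2T^2\varepsilon^2$, handle $\varepsilon=1$ separately, and solve the quadratic. You spell out the root selection and the rationalization a little more explicitly than the paper does.
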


\begin{proof}
Set $\varepsilon = 1 - \rho_1(A,T)\in[0,1]$ and let $b$ maximize \eqref{gamma1_intro}. Then $\lambda_{\min}(G_T(A,b)) = \gamma_1(A,T)$, which equals $(1 - \varepsilon)T/n$ by \eqref{skew_relaxed}. If $\varepsilon = 1$, inequality \eqref{gap_lower_bound} is immediate. If $\varepsilon<1$, Theorem~\ref{thm_rigidity} gives $D(A,T)^2(1 - \varepsilon) \leq n\Vert A\Vert^2T^2\varepsilon^2$.
Solving this quadratic inequality for $\varepsilon$ gives \eqref{gap_lower_bound}.
\end{proof}

Thus a single actuator is quantitatively suboptimal on every window on which the flow has not almost returned, the loss being at least of the order of the return defect divided by $\Vert A\Vert T$. The estimate becomes vacuous when $\Vert A\Vert T\to+\infty$, which is the regime in which averaging takes over, as the next proposition shows. The planar calculation in the next remark makes the constants in this comparison explicit.

\begin{remark}[optimality of the scale]
As in Example~\ref{ex_planar_rotation}, take $A = \omega J$ on $\R^2$, so that $\Vert A\Vert = \vert\omega\vert$ and $P_0 = 0$. Writing $s = \vert\sin(\omega T/2)\vert$ and $c = \vert\cos(\omega T/2)\vert$, one has $D(A,T) = 2\min(s,c)$, whereas Example~\ref{ex_planar_rotation} gives $1 - \rho_1(A,T) = \vert\sin(\omega T)\vert/(\vert\omega\vert T) = 2sc/(\vert\omega\vert T)$. Since $\max(s,c)$ ranges in $[1/\sqrt2,1]$,
$$
\frac{D(A,T)}{\sqrt2\,\Vert A\Vert T} \leq 1 - \rho_1(A,T) \leq \frac{D(A,T)}{\Vert A\Vert T}.
$$
On this planar family, $D(A,T) \leq \Vert A\Vert T$, so the right-hand side of \eqref{gap_lower_bound} is at least $D(A,T)/(2\Vert A\Vert T)$. Thus, as $T$ approaches any positive recovery horizon, the ratio of the exact gap to the lower bound in \eqref{gap_lower_bound} tends to $\sqrt2$; whenever $D(A,T)>0$, this ratio is at most $2$, and this upper bound is sharp since the ratio tends to $2$ as $\vert\omega\vert T\to0$.
\end{remark}

Long-time averaging is different. Let $A$ be cyclic and list its distinct signed frequencies as $\omega_1,\ldots,\omega_n$ in a complex eigenbasis, including zero if the kernel is nontrivial. Define the finite constant
$$
C_A = \max_{1 \leq j \leq n}\sum_{k\neq j}\frac{2}{\vert\omega_j - \omega_k\vert}.
$$
The denominators do not vanish because cyclicity of a skew-symmetric matrix is equivalent to simplicity of its complex spectrum.

\begin{proposition}[long-time averaging]
If $A^\top = -A$ is cyclic, then $\rho_1(A,T) \geq \max\{0,1 - \frac{C_A}{T}\}$.
In particular, $1 - \rho_1(A,T) = \mathrm{O}(T^{-1})$ as $T\to+\infty$.
\end{proposition}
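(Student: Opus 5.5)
The plan is to exhibit one explicit actuator whose Gramian is almost isotropic, and to bound its deviation from isotropy by a Gershgorin-type row-sum estimate. Take the balanced actuator $b$ of Theorem~\ref{thm_scalar_recovery}. In a complex unitary eigenbasis $(v_1,\ldots,v_n)$ of $A_\C$ with $A_\C v_j = i\omega_j v_j$, compatible with conjugation, choose a real unit vector $b$ with $\vert\langle b,v_j\rangle\vert^2 = 1/n$ for every $j$. Such a vector exists: take $b = P_0$-component of norm $1/\sqrt n$ on $\ker A$ when present, and in each rotation plane a vector of squared norm $2/n$, so that each of the two conjugate eigendirections carries $1/n$.

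Next I would compute $G_T(A,b)$ in this eigenbasis, exactly as in the proof of Theorem~\ref{thm_phase_rank}. Write $\beta_j = \langle b,v_j\rangle$; then $\vert\beta_j\vert^2=1/n$. The entries of the Gramian, viewed as a Hermitian matrix in the unitary basis, are
$$
(G_T)_{jk} = \beta_j\overline{\beta_k}\,\Phi_T(\omega_j-\omega_k),
\qquad
\Phi_T(s)=\int_0^T e^{ist}\,dt.
$$
The diagonal entries are therefore all equal to $T/n$. For $j\neq k$ one has $\vert\Phi_T(s)\vert = \vert e^{isT}-1\vert/\vert s\vert \leq 2/\vert s\vert$, hence $\vert(G_T)_{jk}\vert \leq \frac{2}{n\vert\omega_j-\omega_k\vert}$. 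This bound is uniform in $T$, which is the point: the off-diagonal correlations stay bounded while the diagonal grows linearly.

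Writing $G_T = \frac{T}{n}\Id + E$, with $E$ Hermitian with zero diagonal, I would bound $\Vert E\Vert$ by its maximal absolute row sum (Schur/Gershgorin). This gives $\Vert E\Vert \leq \frac1n\max_j\sum_{k\neq j}\frac{2}{\vert\omega_j-\omega_k\vert} = C_A/n$. The spectrum of the real Gramian coincides with that of its unitary representation, so
$$
\lambda_{\min}(G_T(A,b)) \geq \frac{T}{n} - \frac{C_A}{n}.
$$
Dividing by $\gamma_{\mathrm{rel}}(A,T)=T/n$ from \eqref{skew_relaxed} yields $\rho_1(A,T)\geq 1-C_A/T$. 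Combined with the trivial bound $\rho_1\geq0$, this gives the claim, and the $\mathrm{O}(T^{-1})$ statement follows immediately.

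I expect no real obstacle. The only care needed is in checking that the balanced real vector exists and that passing to the complex unitary basis preserves eigenvalues. Both points were already used in the proofs of Theorems~\ref{thm_scalar_recovery} and~\ref{thm_phase_rank}.
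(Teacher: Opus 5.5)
Your proof is correct and takes the same route as the paper. Both use the balanced real actuator with squared coefficient $1/n$ on each complex eigendirection, the $T$-uniform bound $2/(n\vert\omega_j-\omega_k\vert)$ on the off-diagonal Gramian entries in the unitary eigenbasis, and a Gershgorin (row-sum) estimate giving $\lambda_{\min}(G_T(A,b))\geq (T-C_A)/n$; you also spell out the existence of the balanced vector and the spectral invariance under the unitary change of basis, which the paper leaves implicit.
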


\begin{proof}
Choose a real unit actuator whose squared coefficient on every complex eigendirection is $1/n$. In the complex eigenbasis its Gramian has diagonal entries $T/n$. Its off-diagonal entries have modulus
$$
\frac{1}{n}\left\vert\int_0^T e^{i(\omega_j - \omega_k)t}dt\right\vert \leq \frac{2}{n\vert\omega_j - \omega_k\vert}.
$$
Gershgorin's theorem gives $\lambda_{\min}(G_T(A,b)) \geq (T - C_A)/n$. Combining this with $\rho_1(A,T) \geq 0$ and dividing by \eqref{skew_relaxed} proves the result.
\end{proof}

Thus a very small defect on the scale $1/(\Vert A\Vert T)$ forces a near resonance. The long-time estimate controls the defect only at the scale $\mathrm{O}(1/T)$, which is not sufficient to force a near return through Theorem~\ref{thm_rigidity}. The ratio may therefore converge to one at long times without the endpoint map converging to one fixed common sign.

\section{Modal consequences for Galerkin truncations}\label{sec_PDE}

We finally ask what the finite-dimensional results predict for controlled partial differential equations. A separated source $b(x)u(t)$, with a fixed spatial actuator $b$ and a scalar time control $u$, is usually called a lumped control; see, e.g., \cite{PrivatTrelatZuazua2017,GeshkovskiZuazua2022}. It is the infinite-dimensional counterpart of a single input direction: projection onto the first $N$ eigenmodes gives the vector $b_N = (b_1,\ldots,b_N)^\top$, driven by the same scalar control $u(t)$. Whenever a consistent modal limit exists, it is naturally interpreted as a possibly distributional spatial profile $b$ in a separated source $b\,u(t)$.

Let $\varphi_j(x) = \sqrt{\frac{2}{\pi}}\sin(jx)$, $j \geq 1$, be the Dirichlet eigenbasis on $(0,\pi)$, and write formally $b = \sum_{j \geq 1}b_j\varphi_j$. Here $b$ denotes the spatial actuator profile. We do not assume a priori that $b\in L^2(0,\pi)$: its regularity is determined by its modal coefficients, and some limiting profiles below exist only as distributions. We use the spectral notation
\begin{equation}\label{HDs}
b\in H_D^{-s}\ \Leftrightarrow\ \sum_{j \geq 1}j^{-2s}\vert b_j\vert^2<+\infty.
\end{equation}

The three formal controlled equations that we consider in this section are $y_t - y_{xx} = b(x)u(t)$ (lumped-controlled heat equation), $iy_t + y_{xx} = ib(x)u(t)$ (lumped-controlled Schr\"odinger equation) and $y_{tt} - y_{xx} = b(x)u(t)$ (lumped-controlled wave equation), with homogeneous Dirichlet boundary conditions. The control $u$ is real for the heat and wave equations and complex for the Schr\"odinger equation. The harmless factor $i$ in the Schr\"odinger input ensures that its modal input vector is exactly $b$ under the convention used below.

Our procedure is finite dimensional. We first project each equation onto its first $N$ modes and solve the resulting actuator-design problem at fixed $N$. We then let $T\to0^+$ or choose a distinguished finite horizon, and only afterwards examine the modal profile as $N\to+\infty$. The maximizing profiles may depend on $N$ and need not be the projections of one fixed admissible PDE actuator. The resulting infinite sequences are renormalized modal signatures, not normalized maximizers for the full partial differential equations. A full-PDE statement would require a prescribed admissible space for $b$ and estimates uniform in $N$; the small-time regime would additionally require a controlled joint limit in $N$ and $T$. For the heat and Schr\"odinger truncations below, $\Vert A_N\Vert = N^2$, which underscores that none of the fixed-$N$ small-time remainders is asserted to be uniform as $N\to+\infty$. Table~\ref{tab_modal_comparison} summarizes the profiles and regularity thresholds obtained below.

\begin{table}[ht]
\centering
\small
\begin{tabular}{@{}>{\raggedright\arraybackslash}p{.22\textwidth}>{\raggedright\arraybackslash}p{.21\textwidth}>{\raggedright\arraybackslash}p{.25\textwidth}>{\raggedright\arraybackslash}p{.12\textwidth}@{}}
\hline
Model and regime & Modal profile & Interpretation & Threshold\\ \hline
Heat and Schr\"odinger, small time & concentration near $j\sim\sqrt N$ & dipole-type spectral signature & $s>3/2$\\
Schr\"odinger, revival & flat moduli & phase-aligned point-source signature & $s>1/2$\\
Wave, common period, energy metric & flat moduli & velocity-forced point-source signature & $s>1/2$\\
Wave, common period, unweighted metric & moduli proportional to $j$ & metric-dependent dipole signature & $s>3/2$\\ \hline
\end{tabular}
\caption{Finite-dimensional modal signatures. The thresholds concern renormalized formal infinite-mode profiles, not optimizers for the full PDEs.}
\label{tab_modal_comparison}
\end{table}

\subsection{Heat: concentration at intermediate frequencies}
For the first $N$ modes of the Dirichlet heat equation,
\begin{equation}\label{heat_truncation}
A_N = -\diag(1^2,2^2,\ldots,N^2).
\end{equation}
Proposition~\ref{prop_symmetric} rules out exact low-rank recovery at every horizon. At small time, Proposition~\ref{prop_normal_weights} gives the leading scalar weights and their first displacement explicitly.

\begin{proposition}[heat weights and their modal limit]\leavevmode\par
Fix $N$, and let $b_N(T) = \sum_{j = 1}^Nb_{N,j}(T)\varphi_j$ be any unit profile maximizing the scalar problem for \eqref{heat_truncation}. For every sufficiently small $T>0$, the squared coefficients of all such profiles coincide and, as $T\to0^+$, satisfy
\begin{equation}\label{heat_weights}
\begin{aligned}
\vert b_{N,j}(T)\vert^2 &= w_{N,j}(1 + \frac{2j^2 - 3N + 1}{4}T) + \mathrm{O}(T^2),\\
w_{N,j} &= \frac{j^2\binom{2N}{N + j}}{N2^{2N - 2}} \qquad\forall j\in\{1,\ldots,N\}.
\end{aligned}
\end{equation}
The exact weight vector is unique, but the profile is not: its $N$ signs are arbitrary, giving $2^N$ maximizing vectors or $2^{N - 1}$ unoriented actuator lines. The numbers $w_{N,j}$ are positive and sum to one. As $N\to+\infty$, the probability law of $j/\sqrt N$ with weights $w_{N,j}$ converges weakly on $(0,+\infty)$ to the Maxwell density
\begin{equation}\label{heat_bulk}
\frac{4}{\sqrt\pi}x^2e^{-x^2}dx
\end{equation}
with scale parameter $1/\sqrt2$ (law of $\vert Z\vert/\sqrt2$ for a standard Gaussian vector $Z\in\R^3$).
If $b_N^0$ is any leading profile satisfying $\vert b_{N,j}^0\vert^2 = w_{N,j}$, then $b_N^0$ converges weakly to zero in $L^2(0,\pi)$, while, for every fixed $j\geq1$,
\begin{equation}\label{heat_ratio}
\frac{\vert b_{N,j}^0\vert}{\vert b_{N,1}^0\vert}\underset{N\to+\infty}{\longrightarrow} j.
\end{equation}
Thus division by the first modal coefficient reveals a linear spectral envelope. A formal limiting spatial actuator profile $b = \sum_{j \geq 1}b_j\varphi_j$ with coefficient moduli proportional to $j$ belongs to $H_D^{-s}$ if and only if $s>3/2$.

The exact leading invariant and the first correction are
\begin{equation}\label{heat_invariants}
\delta_N(A_N) = \frac{(2N)!}{N2^{2N - 1}},
\qquad
\alpha_N(A_N) = -\frac{3N - 1}{2}.
\end{equation}
Consequently, for every fixed $N$,
\begin{equation}\label{heat_ratio_jet}
\rho_1(A_N,T)
= \frac{N(2N - 1)((N - 1)!)^2}{16^{N - 1}}T^{2N - 2} \Big(1 + \frac{(N - 1)(N - 2)}{3}T + \mathrm{O}(T^2)\Big).
\end{equation}
\end{proposition}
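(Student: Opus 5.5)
This is a direct specialization of Proposition~\ref{prop_normal_weights} to the diagonal matrix $A_N=-\diag(1^2,\ldots,N^2)$, which is symmetric, hence real normal, with simple real spectrum $\lambda_j=-j^2$. Take $v_j=\varphi_j$, i.e.\ the canonical basis, so that $\pi_j=\vert b_{N,j}\vert^2$. The first step is to compute $h_j=\vert\chi'(\lambda_j)\vert=\prod_{k\neq j}\vert k^2-j^2\vert$. Factoring $k^2-j^2=(k-j)(k+j)$ gives
$$
\prod_{k\neq j}\vert k-j\vert=(j-1)!\,(N-j)!,
\qquad
\prod_{k\neq j}(k+j)=\frac{(N+j)!}{2j\,(j)!/j!}=\frac{(N+j)!}{2j\cdot j!}\cdot\frac{j!}{1}\Big/ j!,
$$
and more carefully $\prod_{k\neq j}(k+j)=\frac{(N+j)!}{j!\,2j}$. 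Hence
$$
h_j=\frac{(N+j)!\,(N-j)!}{2j^2}=\frac{(2N)!}{2j^2\binom{2N}{N+j}}.
$$
Thus $1/h_j\propto j^2\binom{2N}{N+j}$. To get $H$, I will use $\sum_{j=1}^N j^2\binom{2N}{N+j}=\tfrac12\sum_{m=-N}^N m^2\binom{2N}{N+m}=\tfrac12\cdot 2^{2N}\cdot\tfrac{2N}{4}=N2^{2N-2}$, using the variance $N/2$ of a centered binomial$(2N,1/2)$. This yields $H=N2^{2N-1}/(2N)!$, which gives $\pi_j^\star=w_{N,j}$ and $\delta_N=1/H$ via \eqref{normal_weights} and \eqref{normal_invariants}.

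Next, $\alpha_N=-\sum_j j^2\pi_j^\star$, which requires the fourth moment $\sum_j j^4\binom{2N}{N+j}=\tfrac12\,2^{2N}\,E[M^4]$. For a sum of $2N$ independent $\pm\tfrac12$ variables, $E[M^4]=3(N/2)^2-2N/16\cdot 2=\tfrac{3N^2}{4}-\tfrac{N}{4}$, with the exact constant to be checked. Dividing by $N2^{2N-2}$ gives $\alpha_N=-(3N-1)/2$. Substituting into \eqref{normal_weight_displacement} with $\operatorname{Re}\lambda_j=-j^2$ gives the factor $1+\frac{-(3N-1)/2+j^2}{2}T=1+\frac{2j^2-3N+1}{4}T$, i.e.\ \eqref{heat_weights}. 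The uniqueness of the weights and the sign multiplicity $2^N$ come from Proposition~\ref{prop_normal_weights} and the subsequent uniqueness remark, since $\mathcal Z_O=\{\pm1\}^N$. For \eqref{heat_ratio_jet}, I will insert $\delta_N$, $\alpha_N$, $\tr A_N/N=-(N+1)(2N+1)/6$ and $\kappa_N$ into \eqref{ratio_jet}. The simplification $N\kappa_N((2N)!)^2/(N^24^{2N-1})=N(2N-1)((N-1)!)^2/16^{N-1}$ and the correction $\alpha_N-\tr A/N=\frac{(N-1)(N-2)}{3}$ are routine, and the $\mathrm{O}(T^2)$ remainder also comes from that proposition.

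For the asymptotics, I will write $w_{N,j}=\frac{2}{N}j^2\,P(M=j)$, where $M$ is a centered binomial with variance $N/2$. The local CLT gives $P(M=j)\approx(\pi N)^{-1/2}e^{-j^2/N}$ uniformly in $j$. With $x=j/\sqrt N$, a Riemann sum then gives weak convergence to the density $\frac{4}{\sqrt\pi}x^2e^{-x^2}$, which indeed integrates to $1$; tightness follows from the second-moment computation. Because the mass escapes to frequencies of order $\sqrt N$ while each fixed coefficient $w_{N,j}=\mathrm{O}(N^{-3/2})\to0$ and the norms stay bounded, $b_N^0\rightharpoonup0$. The ratio $\binom{2N}{N+j}/\binom{2N}{N+1}\to1$ for fixed $j$ gives \eqref{heat_ratio}, and $\sum j^{2-2s}<\infty$ if and only if $s>3/2$. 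The main obstacle is the exact bookkeeping in the combinatorial factorization of $h_j$ and in the fourth binomial moment. These are the steps I would double-check numerically for $N=2$: there $w=(2\cdot4/16,\,4\cdot1/16)\cdot\ldots$ must sum to $1$, and $\alpha_2=-5/2$ must hold.
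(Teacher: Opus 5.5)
Your proposal is correct and follows essentially the paper's route. You specialize Proposition~\ref{prop_normal_weights} with $h_j=(N-j)!(N+j)!/(2j^2)$, evaluate $H$ and $\alpha_N$ through the second and fourth centered moments of a binomial$(2N,1/2)$ variable, and obtain the Maxwell limit from the central limit theorem; the paper writes $\sum_j w_{N,j}h(j/\sqrt N)=2\,\mathbb E(Y_N^2h(\vert Y_N\vert))$ and uses uniform integrability instead of your local CLT with Riemann sums. There is one bookkeeping slip: $w_{N,j}=\frac{4}{N}j^2P(M=j)=\frac{2}{N}j^2P(\vert M\vert=j)$, not $\frac{2}{N}j^2P(M=j)$, and with your factor the Riemann sum would give only half of the Maxwell density you correctly state.
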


\begin{proof}
The characteristic polynomial is $\chi_{A_N}(z) = \prod_{k = 1}^N(z + k^2)$, and
$$
\vert\chi_{A_N}'(-j^2)\vert = \prod_{k\neq j}\vert j^2 - k^2\vert = \frac{(N - j)!(N + j)!}{2j^2}.
$$
Formula \eqref{normal_weights} and the identity
$\sum_{j = 1}^N\frac{2j^2}{(N - j)!(N + j)!} = \frac{N2^{2N - 1}}{(2N)!}$
give the formula for $w_{N,j}$ in \eqref{heat_weights} and the first formula in \eqref{heat_invariants}.

Let $K_N$ be binomial with parameters $(2N,1/2)$ and set $Y_N = (K_N - N)/\sqrt N$. For every bounded continuous function $h$ on $[0,+\infty)$, one has
$\sum_{j = 1}^Nw_{N,j}h(j/\sqrt N) = 2\mathbb E(Y_N^2h(\vert Y_N\vert))$.
The central limit theorem and uniform integrability of $Y_N^2$ yield \eqref{heat_bulk}. Moreover,
$$
\frac{w_{N,j + 1}}{w_{N,j}} = \frac{(j + 1)^2}{j^2}\frac{N - j}{N + j + 1} \underset{N\to+\infty}{\longrightarrow}\frac{(j + 1)^2}{j^2},
$$
proving \eqref{heat_ratio}. For fixed $j$, the central binomial estimate $\binom{2N}{N + j} = \mathrm{O}(4^N/\sqrt N)$ gives $w_{N,j} = \mathrm{O}(N^{-3/2})$. Thus each fixed coefficient tends to zero while $\Vert b_N^0\Vert = 1$. Boundedness and coordinatewise convergence imply $b_N^0\rightharpoonup0$ in $L^2(0,\pi)$. Finally, one has $\sum_{j \geq 1}j^{-2s}j^2<+\infty$ exactly when $s>3/2$.

Formula \eqref{normal_invariants} gives $\alpha_N(A_N) = -\sum_{j = 1}^Nj^2w_{N,j}$.
Using the fourth centered moment of $K_N$, one has
$\sum_{j = 1}^Nj^2w_{N,j} = \frac{2}{N}\mathbb E((K_N - N)^4) = \frac{3N - 1}{2}$.
Substitution in \eqref{normal_weight_displacement} gives the first expansion in \eqref{heat_weights}. Since $\frac{\tr A_N}{N} = -\frac{(N + 1)(2N + 1)}{6}$, the relative correction in \eqref{ratio_jet} is $(N - 1)(N - 2)/3$. The nondegenerate normal maximization in Proposition~\ref{prop_normal_weights} improves the general remainder to $\mathrm{O}(T^2)$ for fixed $N$, and \eqref{heat_ratio_jet} follows. No uniformity of this remainder in $N$ is claimed.
\end{proof}

The density \eqref{heat_bulk} makes the loss of compactness precise: most of the unit profile lies at modes of order $\sqrt N$. After renormalization by the first coefficient, the limiting moduli grow like $j$ and have the regularity of a first derivative of a point source. We call this a dipole signature. The signs have not been selected, so no convergence to one particular derivative of a Dirac mass is asserted. Joint limits in $N$ and $T$ require a spatial actuator class and uniform spectral estimates; see \cite{FattoriniRussell1971,LebeauRobbiano1995,Miller2004,TenenbaumTucsnak2007,PrivatTrelatZuazua2015,PrivatTrelatZuazua2017}.

\subsection{Schr\"odinger: from the small-time profile to revival}

Consider the complex truncation
\begin{equation}\label{schrodinger_truncation}
A_N = -i\diag(1^2,2^2,\ldots,N^2)
\end{equation}
with one complex input profile $b_N\in\C^N$. Let $G_T^\C(A_N,b_N) = \int_0^T e^{tA_N}b_Nb_N^*e^{tA_N^*}\,dt$ be the Hermitian Gramian, where $^*$ denotes the adjoint.
The covariance relaxation is taken over Hermitian positive semidefinite matrices. Since the flow is unitary, its value is $T/N$. The normal formulas and their proofs extend from transposes to adjoints. In particular, the two-term expansion \eqref{fixed_pair_jet} holds for complex controllable pairs and Hermitian Gramians, with $q_n^*Aq_n$ replaced by $\operatorname{Re}(q_n^*Aq_n)$; see \cite[Remark~1.5 (complex extension)]{TrelatZuazua2026gramian}. Here $A_N^* = -A_N$, so this real part vanishes. Hence, for every fixed $N$ and all sufficiently small $T>0$, the unique maximizing weight vector satisfies $\vert b_{N,j}(T)\vert^2 = w_{N,j} + \mathrm{O}(T^2)$, with $w_{N,j}$ as in \eqref{heat_weights}; the linear correction vanishes because the spectrum is purely imaginary, but this does not assert exact equality with $w_{N,j}$ at positive time. The phases are arbitrary. At special finite horizons the maximizing weights become completely flat.

\begin{proposition}[Schr\"odinger recovery horizons]
For \eqref{schrodinger_truncation}, scalar recovery is exact if and only if
\begin{equation}\label{schrodinger_phase_condition}
e^{-ij^2T} = e^{-ik^2T} \qquad\forall j,k\in\{1,\ldots,N\}.
\end{equation}
Equivalently, every $T>0$ is a recovery horizon when $N = 1$; the recovery horizons are $T\in(2\pi/3)\N^*$ when $N = 2$, and $T\in2\pi\N^*$ when $N \geq 3$.

At every recovery horizon, the maximizing unit profiles are exactly those satisfying
\begin{equation}\label{schrodinger_flat}
\vert b_{N,j}\vert^2 = \frac{1}{N} \qquad\forall j\in\{1,\ldots,N\},
\end{equation}
so their squared moduli are unique, while their phases are arbitrary, and $G_T^\C(A_N,b_N) = \frac{T}{N}\Id$.
After multiplication by $\sqrt N$, their modal moduli are constant. The corresponding formal profiles belong to $H_D^{-s}$ exactly when $s>1/2$.
\end{proposition}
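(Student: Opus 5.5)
The plan is to redo the argument of Theorem~\ref{thm_scalar_recovery} in the complex Hermitian setting, where it becomes simpler because there is no real-structure pairing. Since $A_N^*=-A_N$, the flow is unitary and $\tr G_T^\C(A_N,b_N)=T$ for every unit $b_N$. Hence $\lambda_{\min}\leq T/N$, and equality holds if and only if $G_T^\C(A_N,b_N)=\frac{T}{N}\Id$, exactly as in \eqref{isotropic_equivalence}. In the canonical basis the Gramian has entries
$$
\big(G_T^\C\big)_{jk}=b_{N,j}\overline{b_{N,k}}\,\Phi_T(k^2-j^2),
\qquad
\Phi_T(s)=\int_0^T e^{ist}\,dt .
$$
The diagonal entries are $T\vert b_{N,j}\vert^2$, so isotropy forces \eqref{schrodinger_flat}. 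In particular every coefficient is nonzero. The off-diagonal entries must then vanish, so $\Phi_T(k^2-j^2)=0$ for all $j\neq k$. Since $k^2-j^2\neq0$, this is equivalent to $e^{i(k^2-j^2)T}=1$, which is \eqref{schrodinger_phase_condition}. Conversely, if \eqref{schrodinger_phase_condition} holds, then any $b_N$ with flat moduli gives $G_T^\C=\frac{T}{N}\Id$, whatever its phases. This proves the equivalence and the characterization of the maximizers in the same pass. Note that no Lyapunov or cyclicity argument is needed: flatness of the moduli already follows from the diagonal.

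Next we make the horizons arithmetic. Condition \eqref{schrodinger_phase_condition} says that $T(k^2-j^2)\in2\pi\mathbb Z$ for all $1\leq j<k\leq N$. Equivalently, $T\,g_N\in2\pi\mathbb Z$, where $g_N=\gcd\{k^2-j^2\}$; the gcd argument is that $2\pi\mathbb Z/T$ is a subgroup of $\mathbb Z$ containing all the differences, so the condition is the same as $g_N T\in 2\pi\N^*$.
\begin{itemize}
\item For $N=1$ the set of differences is empty, so every $T>0$ works.
\item For $N=2$ the only difference is $3$, giving $T\in(2\pi/3)\N^*$.
\item For $N\geq3$ the differences include $2^2-1^2=3$ and $3^2-2^2=5$, so $g_N=1$ and $T\in2\pi\N^*$. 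Conversely, $T\in 2\pi\N^*$ clearly works for every $N$.
\end{itemize}

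The last assertion is about regularity. After multiplying by $\sqrt N$ the moduli equal $1$, so the formal profile lies in $H_D^{-s}$ if and only if $\sum_j j^{-2s}<\infty$, that is, if and only if $s>1/2$, by \eqref{HDs}. The only delicate point I foresee is the gcd step in the arithmetic reduction. Everything else is a direct entrywise computation.
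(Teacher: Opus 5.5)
Your proof is correct and follows the same route as the paper: the entrywise modal formula for the Hermitian Gramian, flat moduli from the diagonal, vanishing of the off-diagonal integrals exactly under the phase condition, the coprime differences $3$ and $5$ for $N\geq3$, and convergence of $\sum_j j^{-2s}$. Your explicit trace/isotropy step only spells out what the paper takes from \eqref{isotropic_equivalence}. One wording fix in the gcd step: the subgroup of $\mathbb Z$ you mean is $\{m\in\mathbb Z : mT\in2\pi\mathbb Z\}$, not $2\pi\mathbb Z/T$ itself, which is in general not contained in $\mathbb Z$.
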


\begin{proof}
In the modal basis, one has $(G_T^\C(A_N,b_N))_{jk} = b_{N,j}\overline{b_{N,k}}\int_0^T e^{-i(j^2 - k^2)t}\,dt$.
If this Gramian equals $T\,\Id/N$, its diagonal entries give \eqref{schrodinger_flat}, so no coefficient vanishes. Its off-diagonal entries vanish exactly under \eqref{schrodinger_phase_condition}. The converse follows from the same formula.

For $N = 2$, the only nonzero frequency difference has modulus three. For $N \geq 3$, the consecutive differences $2^2 - 1^2 = 3$ and $3^2 - 2^2 = 5$ are coprime. This gives the stated horizons. The regularity threshold follows from convergence of $\sum_{j \geq 1}j^{-2s}$.
\end{proof}

The same truncation therefore has two genuinely different signatures. Fast control gives the concentrated heat weights and the dipole threshold $s>3/2$, whereas a revival gives flat weights and the point-source threshold $s>1/2$. This is a change in the moduli of the finite-dimensional maximizers, not only in their phases. The statement concerns one complex input. The corresponding real scalar-input problem is different, since a complex input generally represents two real channels; its generator has the signed frequencies $\{\pm j^2\}$ and a different small-time discriminant.

\subsection{Wave: recovery under the physical forcing constraint}
For the wave equation, both the state metric and the admissible action of the control must be specified. Write the $j$th modal state in energy coordinates as $z_j = (jy_j,\dot y_j)\in\R^2$. For a spatial actuator $b_N = \sum_{j = 1}^Nb_{N,j}\varphi_j$, $\sum_{j = 1}^Nb_{N,j}^2 = 1$, define the induced state-space actuator $\widehat b_N = \bigoplus_{j = 1}^N(0,b_{N,j})^\top$. The first $N$ modes driven by the scalar control $u(t)$ read
\begin{equation}\label{wave_truncation}
\dot z = A_Nz + \widehat b_Nu(t),
\qquad
A_N = \bigoplus_{j = 1}^N(-jJ),
\qquad J = \begin{pmatrix}0&-1\\1&0\end{pmatrix}.
\end{equation}
Thus $A_N$ is skew-symmetric for the energy metric, but the actuator is restricted to the velocity direction of every modal plane.

\begin{proposition}[wave recovery at a common period]
Among the physically constrained actuators in \eqref{wave_truncation}, for $T = 2\pi M$ with $M\in\N^*$, one has
$$
\max_{\sum_jb_{N,j}^2 = 1}\lambda_{\min}(G_T(A_N,\widehat b_N)) = \frac{T}{2N}.
$$
The maximizing profiles are those satisfying $b_{N,j}^2 = \frac{1}{N}$ for every $j\in\{1,\ldots,N\}$; their squared coefficients are unique, but their $N$ signs are arbitrary. They attain the unrestricted covariance relaxation in dimension $2N$, despite the physical forcing constraint. After multiplication by $\sqrt N$, their modal moduli are constant and have the threshold $s>1/2$ in \eqref{HDs}.
\end{proposition}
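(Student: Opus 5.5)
The plan is to fit the statement into the skew-symmetric framework of Section~\ref{sec_skew} in dimension $2N$, and then to check that the velocity restriction does not prevent isotropy at a common period. Since $A_N$ is skew-symmetric in energy coordinates and $\Vert\widehat b_N\Vert^2=\sum_j b_{N,j}^2=1$, the identity \eqref{skew_relaxed} shows that every admissible Gramian has trace $T$. Hence
$$
\lambda_{\min}(G_T(A_N,\widehat b_N))\leq \gamma_{\mathrm{rel}}(A_N,T)=\frac{T}{2N}
$$
for every constrained actuator. It remains to show that the value $T/(2N)$ is attained, and to identify exactly which profiles attain it.

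For attainment, I would use Theorem~\ref{thm_scalar_recovery} directly. The complex spectrum of $A_N$ is $\{\pm ij\}_{1\leq j\leq N}$, which is simple, so $A_N$ is cyclic. Moreover $\ker A_N=\{0\}$, and at $T=2\pi M$ we have $e^{TA_N}=\bigoplus_j e^{-2\pi M jJ}=\Id$, so the recovery conditions hold with $\sigma=1$. The decomposition \eqref{real_blocks} consists of the modal planes $V_j$, on which $A_N$ acts as $-jJ$. Since $V_0$ is trivial, the maximizing modal weights in \eqref{optimal_modal_weights} are $\Vert P_j b\Vert^2=2/(2N)=1/N$. The theorem allows an arbitrary direction inside each plane, so in particular the velocity direction $(0,b_{N,j})^\top$ is admissible. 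Therefore $\widehat b_N$ with $b_{N,j}^2=1/N$ yields $G_T=T\,\Id/(2N)$.

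For the converse, take any constrained actuator attaining $T/(2N)$. It is in particular a unit vector attaining the relaxed value, so the characterization part of Theorem~\ref{thm_scalar_recovery} forces $\Vert P_j\widehat b_N\Vert^2=b_{N,j}^2=1/N$. The signs remain free because $b_{N,j}\mapsto -b_{N,j}$ is a rotation by $\pi$ in $V_j$, and such a rotation commutes with $A_N$. The final assertion follows by renormalizing, which gives constant moduli $\sqrt N\,\vert b_{N,j}\vert=1$. By \eqref{HDs} the corresponding formal profile lies in $H_D^{-s}$ exactly when $\sum_j j^{-2s}<\infty$, that is, when $s>1/2$.

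The only delicate step is making sure that "recovery at a common period" really does apply inside the constrained class. The point is that the optimal set in Theorem~\ref{thm_scalar_recovery} is described purely by modal weights, with directions inside each plane left arbitrary, so it intersects the velocity-only subspace. If one prefers a direct argument over citing the theorem, I would instead compute the Gramian in the complex eigenbasis $v_{\pm j}$. Each velocity vector $(0,b_{N,j})$ has equal modulus $\vert b_{N,j}\vert^2/2$ on $v_{j}$ and on $v_{-j}$. At $T=2\pi M$ all the cross integrals $\int_0^Te^{i(\omega-\nu)t}\,dt$ vanish, because the signed frequencies are distinct integers. This leaves a diagonal Gramian with entries $T b_{N,j}^2/2$, and these entries equal $T/(2N)$ exactly when $b_{N,j}^2=1/N$.
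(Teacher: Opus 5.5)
Your proposal is correct, and your main route differs from the paper's. The paper never invokes Theorem~\ref{thm_scalar_recovery}. It computes directly that at $T=2\pi M$ the orthogonality of $\cos(jt)$ and $\sin(jt)$ makes $G_T(A_N,\widehat b_N)$ block diagonal, with $j$th block $\frac{T b_{N,j}^2}{2}\Id_{\R^2}$. Hence $\lambda_{\min}(G_T(A_N,\widehat b_N))=\frac{T}{2}\min_j b_{N,j}^2$ for every constrained actuator, and the value, the maximizers and the sign freedom follow at once. Comparison with $T/(2N)$ from \eqref{skew_relaxed} enters only at the end. Your fallback computation in the complex eigenbasis is exactly this argument, written in complex rather than real coordinates.

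The paper's route gives a closed formula for the objective on the whole constrained class, not only at the optimum, and needs neither cyclicity nor the characterization of scalar maximizers. Your route explains why the velocity constraint costs nothing. The optimal set of Theorem~\ref{thm_scalar_recovery} is described only by the plane weights $\Vert P_j b\Vert^2=1/N$, so it contains every vector with one prescribed direction in each plane $V_j$ and matching amplitudes. Thus any actuator class that fixes one direction per modal plane but leaves the amplitudes free meets it; the velocity direction is merely one such choice.

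Two minor remarks. First, \eqref{real_blocks} is written with $\omega_\ell J$ and $\omega_\ell>0$, whereas here $A_N$ acts as $-jJ$ on $V_j$. This only reverses the orientation of each plane and leaves the projectors $P_j$ unchanged, so the theorem applies as you use it. Second, the sign freedom already follows from the characterization itself, since only $b_{N,j}^2$ enters. Your commuting-rotation remark is therefore a consistency check rather than a needed step.
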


\begin{proof}
Over an integer number of common periods, the functions $\cos(jt)$ and $\sin(jt)$ are mutually orthogonal for distinct positive integers $j$. The Gramian is block diagonal, and its $j$th diagonal block is
$$
b_{N,j}^2\int_0^T e^{-jtJ}\begin{pmatrix}0&0\\0&1\end{pmatrix}e^{jtJ}dt = \frac{Tb_{N,j}^2}{2}\,\Id_{\R^2}.
$$
Consequently,
$\lambda_{\min}(G_T(A_N,\widehat b_N)) = \frac{T}{2}\min_jb_{N,j}^2 \leq \frac{T}{2N}$,
with equality if and only if $b_{N,j}^2 = \frac{1}{N}$ for every $j$. Since the state dimension is $2N$, the unrestricted relaxed value \eqref{skew_relaxed} is also $T/(2N)$.
\end{proof}

The following comparison changes the state norm and therefore the worst-direction criterion; it is not merely a coordinate rewrite. Keep the same physical actuator class but measure the state in the unweighted coordinates $(y_j,\dot y_j)$. At $T = 2\pi M$, the $j$th diagonal block of the Gramian is then $\frac{Tb_{N,j}^2}{2}\diag(j^{-2},1)$.
The maximal value becomes $T/(2\sum_{j = 1}^Nj^2)$ and the maximizing weights become $b_{N,j}^2 = j^2/(\sum_{k = 1}^Nk^2)$.
The squared coefficients are again unique and their signs arbitrary, and division by the first modal coefficient gives $\vert b_{N,j}\vert/\vert b_{N,1}\vert = j$, hence the threshold $s>3/2$ in \eqref{HDs}. Thus the class of admissible physical forcings is unchanged, but its maximizing profile depends on the state metric: the energy metric produces a flat point-source signature, whereas the unweighted metric produces a dipole signature. Allowing arbitrary directions in every two-dimensional modal plane would define yet another actuator class. Passing to the full equation would additionally require an admissible control space, since the flat limiting profile is generally distributional. Related optimal observation and controller-location problems for one-dimensional waves, including modal approximations, are studied in \cite{PrivatTrelatZuazua2013observation,PrivatTrelatZuazua2013controllers}; see also \cite{Ingham1936,PrivatTrelatZuazua2016,TucsnakWeiss2009}.

\section{Conclusion and perspectives}
The rank of the actuator, the control horizon and the spectral geometry of the dynamics form a single design problem. Changing the horizon can alter not only optimal performance, but also the rank, spectral weights and geometry of the optimal actuator. At small times, the relaxed maximizer has full rank. When $A$ is cyclic, the fraction captured by a design of rank at most $k<n$ is equivalent to $n c_k(A)T^{2\lceil n/k\rceil - 2}$, with $c_k(A)>0$; at rank one, both the leading coefficient and its first correction are explicit. For symmetric dynamics, the full-rank obstruction persists at every positive horizon.

Skew-symmetric dynamics can instead admit exact low-rank recovery. At phase-aligned horizons, orbit averaging can make a low-rank covariance exactly isotropic and close the gap. Scalar recovery occurs precisely when the matrix is cyclic and the endpoint flow is a common sign, while the endpoint phase classes determine the least recovery rank in general. Away from equality, a sufficiently small defect forces a near return at the scale identified in Theorem~\ref{thm_rigidity}, whereas ordinary long-time cancellation can make the ratio tend to one without convergence to a fixed return.

The heat and Schr\"odinger truncations have the same concentrated small-time profile, whereas Schr\"odinger revivals and common wave periods produce flat modal weights. After renormalization, these regimes have respectively dipole-type and point-source-type envelopes. These results identify several distinct rank-time regimes governed by spectral geometry.

Open questions include the global rank-constrained optimizer and least recovery rank away from small time and phase alignment; exactness of the fixed-covariance lower bound for $c_k(A)$ in Proposition~\ref{prop_rank_gap} and accumulation of intermediate-rank optimizers; stability of phase classes; the joint truncation-horizon PDE limit; and removal of the factor $\sqrt n$ in Theorem~\ref{thm_rigidity} and Corollary~\ref{cor_gap}.

\appendix
\section{Auxiliary results and correction}

\subsection{Real covariance realization at the recovery-rank threshold}

The lower bound in Theorem~\ref{thm_phase_rank} follows from the orthogonality of distinct endpoint phase classes. The next construction shows that requiring a real covariance causes no additional loss of rank.

\begin{lemma}[real phase-block realization]\label{lem_real_realization}
Use the notation \eqref{spectral_decomposition}-\eqref{phase_multiplicity}. There exists a real covariance $X\succeq0$ satisfying $P_\omega XP_\omega = \frac{1}{n}P_\omega$, $P_\omega XP_\nu = 0$ if $e^{i\omega T}\neq e^{i\nu T}$, and $\displaystyle\rank X = \sum_{\zeta\in\sigma(e^{TA_\C})}m_\zeta(T)$.
\end{lemma}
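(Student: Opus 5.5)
We will build $X$ as a Gram matrix $X=\frac1n WW^*$ of explicit vectors, one phase class at a time, and then arrange realness through complex conjugation. The starting observation is that conjugation maps $E_\omega$ onto $E_{-\omega}$. It preserves multiplicities, $\mu_{-\omega}=\mu_\omega$, and it sends the phase class $\Omega_\zeta(T)$ onto $\Omega_{\bar\zeta}(T)$, so $m_{\bar\zeta}(T)=m_\zeta(T)$. The classes therefore split into self-conjugate classes, those with $\zeta\in\{-1,1\}$, and conjugate pairs $\{\zeta,\bar\zeta\}$ with $\zeta\notin\R$. For each $\omega$ we fix an orthonormal basis $(u^\omega_1,\ldots,u^\omega_{\mu_\omega})$ of $E_\omega$, chosen so that $u^{-\omega}_i=\overline{u^\omega_i}$; when $\omega=0$ we take the vectors real, which is possible because $E_0=(\ker A)_\C$.

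Consider first a class $\zeta$ and set $m=m_\zeta(T)$. Define $W_\zeta:\C^m\to\C^n$ by $W_\zeta e_i=\sum_{\omega\in\Omega_\zeta(T),\,i\leq\mu_\omega}u^\omega_i$, so that the $i$-th input direction uses the $i$-th basis vector of every eigenspace in the class that is large enough. Then $P_\omega W_\zeta W_\zeta^*P_\omega=\sum_{i\leq\mu_\omega}u^\omega_i(u^\omega_i)^*=P_\omega$ for $\omega$ in the class, and $W_\zeta W_\zeta^*$ vanishes outside the class. This matrix is Hermitian and positive semidefinite, and its rank is exactly $m$, because for any $\omega$ attaining the maximum the vectors $P_\omega W_\zeta e_i$ are orthonormal.

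Summing over classes, set $X=\frac1n\sum_\zeta W_\zeta W_\zeta^*$. Distinct classes involve mutually orthogonal eigenspaces, so the ranges of the $W_\zeta$ are orthogonal and the ranks add. This gives the block constraints together with $\rank X=\sum_\zeta m_\zeta(T)$.

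The step I expect to be the real obstacle is showing that $X$ is real, that is, $\bar X=X$. By the conjugation-compatible choice of bases, $\overline{W_\zeta}=W_{\bar\zeta}$. For a nonreal pair $\{\zeta,\bar\zeta\}$ the sum $W_\zeta W_\zeta^*+W_{\bar\zeta}W_{\bar\zeta}^*$ is therefore invariant under conjugation, hence real, and its real rank equals its complex rank $2m_\zeta$. For a self-conjugate class $\zeta=\pm1$ the class is closed under $\omega\mapsto-\omega$ and $\overline{W_\zeta}=W_\zeta$ holds column by column, so $W_\zeta W_\zeta^*$ is again real. In every case $X$ is a real symmetric positive semidefinite matrix whose real rank coincides with its complex rank, which establishes the lemma. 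The points to check carefully are two: that the zero frequency is handled with real basis vectors, and that within a self-conjugate class the indexing $i\leq\mu_\omega$ is symmetric under $\omega\mapsto-\omega$. The latter holds because $\mu_{-\omega}=\mu_\omega$.
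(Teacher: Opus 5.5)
Your proposal is correct and is essentially the paper's argument made explicit: the map $u^\omega_i\mapsto e_i$ is the paper's isometry $Q_\omega:E_\omega\to F_\zeta=\C^{m_\zeta(T)}$, so $W_\zeta W_\zeta^*$ has blocks $Q_\omega^*Q_\nu$, exactly the Gram operator $X_\zeta$, and your conjugation-compatible bases (real on $E_0$, $u^{-\omega}_i=\overline{u^\omega_i}$) play the role of the paper's compatibility with complex conjugation. Trace one, which you do not mention, follows automatically from $P_\omega XP_\omega=P_\omega/n$ and $\sum_\omega\mu_\omega=n$.
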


\begin{proof}
For each endpoint phase $\zeta$, choose a complex Hilbert space $F_\zeta$ of dimension $m_\zeta(T)$. For every $\omega\in\Omega_\zeta(T)$, choose an isometry $Q_\omega:E_\omega\to F_\zeta$. These choices can be made compatible with complex conjugation. For a nonreal pair $\{\zeta,\overline\zeta\}$, choose the maps for one phase and define those for the other by conjugation. If $\zeta\in\{-1,1\}$, endow $F_\zeta$ with a conjugation, choose the maps for positive frequencies, define the negative-frequency maps by conjugation, and choose a real isometry on $E_0$ if the zero frequency belongs to the class.

On the phase block $\bigoplus_{\omega\in\Omega_\zeta(T)}E_\omega$, define $X_\zeta$ by $P_\omega X_\zeta P_\nu = \frac{1}{n}Q_\omega^*Q_\nu$.
This is a positive Gram operator. Its diagonal block on $E_\omega$ is $P_\omega/n$, and its rank is $m_\zeta(T)$ because one of the isometries is onto $F_\zeta$. Take the orthogonal sum over the endpoint phases. Compatibility with conjugation shows that the resulting operator is the complexification of a real positive semidefinite matrix. Its trace is one, and the ranks of the mutually orthogonal phase blocks add as claimed.
\end{proof}

\subsection{The error in the separated Brunovsk\'y objective}\label{sec_brunovsky}
Proposition~1 of \cite{GeshkovskiZuazua2022} contains an error, corrected in~\cite{GeshkovskiZuazuaCorrigendum}. In their notation, $P(b)$ is the Brunovsk\'y change of coordinates of Lemma~1, $\kappa(T)=\Vert\Gamma_{(\mathfrak A,e_n)}\Vert$ is the companion-pair null-control cost from Eq.~(15), and $\mathfrak C(b,T)=\mathcal C_T(A,b)$ is the worst null-control cost. The two displays after Eq.~(17) infer $\mathcal C_T(A,b)=\kappa(T)\Vert P(b)^{-1}\Vert$ from $\Vert P(b)^{-1}y_0\Vert\leq\Vert P(b)^{-1}\Vert\Vert y_0\Vert$. This is only an upper bound: since $\Gamma_b=\Gamma_{(\mathfrak A,e_n)}P(b)^{-1}$, the exact cost is $\Vert\Gamma_{(\mathfrak A,e_n)}P(b)^{-1}\Vert$, and equality in sub-multiplicativity need not hold.

For the exact relation, write $A=P\mathfrak A P^{-1}$ and $b=Pe_n$, where $\mathfrak A$ is the companion matrix of the characteristic polynomial of $A$, and set $R_T^-=G_T(-\mathfrak A,e_n)$. The identity $G_T(A,b)=e^{TA}G_T(-A,b)e^{TA^\top}$ turns the null-control cost for $y'=Ay+bu$ into the worst reachability cost for $-A$, while the change of coordinates gives $G_T(-A,b)=PR_T^-P^\top$.
Consequently, the exact squared worst null-control cost is
\begin{equation}\label{brunovsky_exact_cost}
\mathcal{C}_T(A,b)^2
= \lambda_{\max}((P^{-1})^\top (R_T^-)^{-1}P^{-1})
= \Vert (R_T^-)^{-1/2}P^{-1}\Vert^2.
\end{equation}
Sub-multiplicativity yields only $\mathcal{C}_T(A,b)^2 \leq \lambda_{\max}((R_T^-)^{-1})\Vert P^{-1}\Vert^2$.
Thus the factorization asserted in \cite[Proposition~1]{GeshkovskiZuazua2022} and its claimed equivalence with \cite[Eq.~(13)]{GeshkovskiZuazua2022} are false in general. Equation~(13) and the computations based on Eq.~(18) optimize the horizon-independent surrogate $\Vert P^{-1}\Vert$, so the $T$-independence claimed in \cite[Remark~4]{GeshkovskiZuazua2022} does not follow. The congruence itself remains exact, and the same nonseparability applies to the reachability cost $C_T(A,b)$ studied here, with $G_T(A,b)$ in place of $G_T(-A,b)$.

The following two-dimensional counterexample disproves the separated factorization and shows that its surrogate selects a different leading small-time actuator.
Take $A = \diag(1,2)$, $b = (x,y)^\top$ with $x^2 + y^2 = 1$ and $xy\neq0$.
With the convention of \cite[Proposition~1]{GeshkovskiZuazua2022},
$$
P = \begin{pmatrix}-2x&x\\-y&y\end{pmatrix}, \qquad \mathfrak{A} = \begin{pmatrix}0&1\\-2&3\end{pmatrix}, \qquad A = P\mathfrak{A} P^{-1}, \qquad b = Pe_2.
$$
Direct calculation gives
$$
P^{-1} = \begin{pmatrix}-1/x&1/y\\-1/x&2/y\end{pmatrix}, \qquad \det(13\,\Id - (P^{-1})^\top P^{-1}) = -\frac{(13x^2 - 5)^2}{x^2y^2} \leq 0.
$$
Thus $\Vert P^{-1}\Vert^2 \geq 13$, with equality at $x^2 = 5/13$. At this point the eigenvalues of $(P^{-1})^\top P^{-1}$ are $13$ and $13/40$. The horizon-independent surrogate in \cite[Eq.~(13)]{GeshkovskiZuazua2022} therefore selects $x^2 = 5/13$.

The true small-time null-control objective selects another direction. Here $d_2(-A,\allowbreak b) = \vert xy\vert$ and $\kappa_2 = 1/12$, so the fixed-pair expansion \eqref{fixed_pair_jet}, applied to $(-A,b)$, gives
$\mathcal{C}_T(A,b)^2 = \frac{12}{x^2(1 - x^2)T^3}(1 + \mathrm{O}(T))$.
Its leading coefficient is uniquely minimized at $x^2 = 1/2$. At this point,
$\Vert P^{-1}\Vert^2 = 7 + 3\sqrt5$ and $\lambda_{\max}((R_T^-)^{-1}) = 12T^{-3}(1 + \mathrm{O}(T))$,
whereas the exact squared cost is $48T^{-3}(1 + \mathrm{O}(T))$. The ratio between the separated upper bound and the exact squared cost is therefore $\frac{7 + 3\sqrt5}{4} + \mathrm{O}(T)$.

\begingroup
\section*{Acknowledgements}
The second author was partially supported by the European Research Council (ERC) through the European Union's Horizon Europe programme (ERC Advanced Grant CoDeFeL, grant agreement No.~101096251); the Air Force Office of Scientific Research under award No.~FA8655-24-1-7027; the Alexander von Humboldt Professorship; the European Union's Horizon Europe MSCA Doctoral Network ModConFlex (grant agreement No.~101073558); the Research Council of Norway through SURE-AI, grant No.~357482; and Grant PID2023-146872OB-I00 (DyCMaMod), funded by MICIU/AEI/10.13039/501100011033 and by ERDF/EU\@. This article is based upon work from COST Actions CA24122 (mSPACE) and CA24136 (InterCoML), supported by COST (European Cooperation in Science and Technology). 
\par\smallskip
\noindent
AI tools (ChatGPT and Claude) were used solely to improve the exposition, check the bibliography, and compare versions. All scientific ideas, results, proofs, and initial drafts are the authors' own. The authors assume responsibility for all content.
\par
\endgroup

\end{document}